\documentclass[a4paper]{amsart}
\usepackage{amsmath,caption,booktabs,lipsum}
\numberwithin{equation}{section}
\usepackage{amsthm}
\usepackage{amsfonts}
\usepackage{amssymb}
\usepackage{mathrsfs}
\usepackage{tikz}
\usepackage{environ}
\usepackage{elocalloc}
\usepackage{url}
\usepackage{caption}
\usepackage{graphicx}
\usepackage{CJKutf8}
\usepackage[normalem]{ulem}
\usepackage{xcolor}
\usepackage{etoolbox}
\usepackage{caption}
\usepackage{mathtools}
\usepackage{dcpic}
\usepackage{tikz-cd}
\usepackage[bottom]{footmisc}
\usepackage{hyperref}
\usepackage{enumitem}
\usepackage{stmaryrd}
\usetikzlibrary{positioning}
\usetikzlibrary{arrows,chains,positioning,scopes,quotes}
\usetikzlibrary{decorations.markings}

\newcommand{\ai}{\alpha}
\newcommand{\be}{\beta}
\newcommand{\Ga}{\Gamma}
\newcommand{\ga}{\gamma}

\newcommand{\e}{\epsilon}

\newcommand{\lam}{\lambda}

\newcommand{\om}{\omega}
\newcommand{\Om}{\Omega}
\newcommand{\fr}{\textnormal{FocalRad}}
\newcommand{\si}{\sigma}
\newcommand{\Si}{\Sigma}

\newcommand{\cms}{\operatorname{comass}}

\newcommand{\ms}{\mathbf{M}}

\newcommand{\cd}{\cdots}
\newcommand{\T}{\mathbf{T}}
\newcommand{\s}{\subset}

\newcommand{\cp}{^\complement}

\newcommand{\cpt}{\mathbb{CP}}

\newcommand{\ti}[1]{\tilde{#1}}

\newcommand{\ov}[1]{\overline{#1}}
\newcommand{\no}[1]{\left\lVert#1\right\rVert}
\newcommand{\cu}[1]{\left\llbracket#1\right\rrbracket}

\newcommand{\du}{^\ast}

\newcommand{\m}{^{-1}}
\newcommand{\ts}{\otimes}

\newcommand{\w}{\wedge}

\newcommand{\pd}{\partial}


\newcommand{\R}{\mathbb{R}}
\newcommand{\Z}{\mathbb{Z}}

\newcommand{\C}{\mathbb{C}}






\newcommand{\mina}{\min\ms}

\makeatother
\theoremstyle{plain}
\newtheorem{thm}{Theorem}[section]
\newtheorem{lem}[thm]{Lemma}

\theoremstyle{definition}
\newtheorem{defn}{Definition}[section]

\theoremstyle{remark}
\newtheorem{rem}{Remark}

\raggedbottom
\title[Non-calibratable area-minimizers]{Homologically area-minimizing surfaces that cannot be calibrated: infinite Lavrentiev gap}
\author{Zhenhua Liu}
\dedicatory{Dedicated to Xunjing Wei}
\setcounter{tocdepth}{1}
\begin{document}
	\maketitle\vspace{-3em}
	\begin{abstract}
In 1974, Federer \cite{HF2} proved that all area-minimizing hypersurfaces on orientable manifolds were calibrated by weakly closed differential forms. However, in this manuscript, we prove the contrary in higher codimensions: calibrated area-minimizers are non-generic. This is surprising given that almost all known examples of area-minimizing surfaces are confirmed to be minimizing via calibration.

Let integers $d\ge 1$ and $c\ge 2$ denote dimensions and codimensions, respectively. Let $M^{d+c}$ denote a closed, orientable, smooth manifold of dimension $d+c$. For each $d$-dimensional integral homology class $[\Si]$ on $M$, we introduce $\Om_{[\Si]}$ as the set of metrics for which any $d$-dimensional homologically area-minimizing surface in the homology class $[\Si]$ in any $g\in \Om_{[\Si]}$ cannot be calibrated by any weakly closed measurable differential form. Our main result is that $\Om_{[\Si]}$ is always a non-empty open set.

To exemplify the prevalence of such phenomenon, we show that for any homology class $[\Si]$ on $\mathbb{CP}^n,$ the closure of $\Om_{[\Si]}$ contains the Fubini-Study metric. 

In the hypersurface case, we show that even when a smooth area-minimizer is present, the calibration forms are compelled in some cases to have a non-empty singular set. This provides an answer to a question posed by Michael Freedman \cite{MF1}.

The above phenomenon is essentially due to the Lavrentiev phenomenon of the minimal mass of homology classes. In this direction, for $[\Si]$ with $d\ge1 ,c\ge 2$, we show that the ratio of the integral minimal mass to the real minimal mass is unbounded when we consider all Riemannian metrics. Also, it is always possible to fill a multiple of a homology class with an arbitrarily small area compared to the class itself. This settles the Riemannian version of several conjectures by Frank Morgan \cite{FM4}, Brian White \cite{BW} and Robert Young \cite{RY}.
	\end{abstract}
	\tableofcontents
\section{Introduction}
In this paper, area minimizing surfaces refer to area-minimizing integral currents, which roughly speaking are oriented surfaces counted with multiplicity, minimizing the area functional. Calling them surfaces is justified thanks to the Almgren's Big Theorem (\cite{A}) and De Lellis school's work (\cite{DS1}\cite{DS2}\cite{DS3}\cite{DMS}). Their results show that $n$-dimensional area minimizing integral currents are smooth manifolds outside of a rectifiable singular set of dimension at most $n-2.$ (In the codimension 1 case, the dimension of the singular set can be reduced to $n-7$ by \cite{HF1}.) 

On the other hand, in the foundational work \cite{FF}, it is established on a Riemannian manifold, any integral homology class admit a representative that is area-minimizing. Consequently, we have extremely powerful existence and regularity at our disposal. 

Contrarily, proving that a selected representative of a homology class is area-minimizing poses substantial challenges. To the author's knowledge, over the past 70 years, calibrations (\cite{HLg}) essentially remain the sole method available, encompassing K\"{a}hler, special Lagrangian, associative, and Caley calibrations in specific holonomic geometries, Lawlor's vanishing calibrations \cite{GL}, Morgan and Mackenzie's planar calibrations \cite{FM}\cite{DN} and the foliation by minimal or mean convex/concave hypersurfaces in hypersurface instances.

Furthermore, Federer proved in \cite{HF2} that every area-minimizing hypersurface residing on compact orientable manifolds can be calibrated by a weakly closed differential form. 

In light of mounting evidences, a natural question arises: 

\textbf{Are calibrated area-minimizers also generic in higher codimensions, given that calibration is predominantly the only way for proving area-minimization?} 

There exist sporadic counterexamples. Torsion classes, for instance, are trivially non-calibratable. For non-torsion classes, Section 5.11 of \cite{HF2} gives a $3$-torus counter-example. Nonetheless, it remains an open question whether such non-calibratable minimizers are a general occurrence or happening only on special manifolds.

To resolve the above question, we give a negative answer as follows.
\begin{thm}\label{noncal}Assume that,
	\begin{itemize}
\item $M^{d+c}$ is a compact closed orientable $d+c$ dimensional smooth manifold with $d\ge 2,c\ge 1.$
\item $[\Si]$ is a $d$-dimensional integral homology class on $M$.
	\end{itemize}
Define  $\Om_{[\Si]}$ to be the set of metrics so that that for any metric $g\in \Om_{[\Si]}$, every $d$-dimensional homologically area-minimizing integral current in the homology class $[\Si]$ cannot be calibrated by real flat cochains. Then
\begin{itemize}
	\item $\Om_{\Si}$ is open.
	\item  $\Om_{\Si}$ is non-empty.
\end{itemize}
\end{thm}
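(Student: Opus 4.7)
The proof hinges on identifying non-calibratability with a Lavrentiev-type gap between integer and real minimal mass. For each metric $g$, set
\[
\cm_\Z(g,[\Si])=\inf\{\cm_g(T):T\text{ integral},\ [T]=[\Si]\},\quad \cm_\R(g,[\Si])=\inf\{\cm_g(S):S\text{ real flat chain},\ [S]_\R=[\Si]_\R\}.
\]
The key equivalence to establish is: an integral area-minimizer $T$ in $[\Si]$ is calibrated by a real flat cochain if and only if $\cm_\Z(g,[\Si])=\cm_\R(g,[\Si])$. The forward direction is classical calibration theory: a weakly closed comass-$\le 1$ flat cochain $\om$ calibrating $T$ satisfies $\om(T)=\cm_g(T)$ and $\om(S)=\om(T)$ for every real $S$ with $\pd S=\pd T$, so $\cm_g(T)\le\om(S)\le \cm_g(S)$. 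The converse is Hahn--Banach on the Banach space of real flat chains: if $T$ is real-minimizing, the infimum is witnessed by a supporting linear functional on chains modulo boundaries, which is precisely a weakly closed flat cochain calibrating $T$. Hence $g\in\Om_{[\Si]}$ iff $\cm_\Z(g,[\Si])>\cm_\R(g,[\Si])$ strictly.

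\textbf{Openness.} Both $g\mapsto\cm_\Z(g,[\Si])$ and $g\mapsto\cm_\R(g,[\Si])$ are continuous in the $C^0$ topology on metrics, since $\|g-g'\|_{C^0}<\e$ multiplicatively distorts $\cm$ on every chain by at most $1+O(\e)$. The strict inequality defining $\Om_{[\Si]}$ therefore persists under small $C^0$ perturbations of $g$.

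\textbf{Non-emptiness.} This is the main technical step. The plan is to exploit the codimension hypothesis to engineer a metric $g_0$ on $M$ for which some integer multiple $k[\Si]$ admits a representative $T_k$ with $\cm_{g_0}(T_k)\le \lam\cdot k\cdot \cm_\Z(g_0,[\Si])$ for some $\lam<1$; then $\tfrac 1k T_k$ is a real flat chain in $[\Si]_\R$ of mass strictly below $\cm_\Z(g_0,[\Si])$. Concretely, fix a smooth representative $\Si_0$ of $[\Si]$ and, inside a tubular neighborhood of an auxiliary lower-dimensional skeleton where the normal bundle of $\Si_0$ admits a branched $k$-sheeted model, implant a $C^0$ conformal bump that shrinks this cheap branched filling while leaving the ambient geometry essentially intact. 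A uniform positive lower bound for integer mass, coming from intersection against a fixed transverse cycle and the monotonicity formula, will prevent any single-valued integer competitor from collapsing, yielding the desired strict gap.

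\textbf{Main obstacle.} The delicate step is the non-emptiness construction. Two bounds must hold simultaneously on the same $g_0$: an \emph{upper} bound on the real minimal mass witnessed by an explicit cheap $k$-sheeted filling, and a \emph{lower} bound on the integer minimal mass uniform over \emph{all} integer competitors. The latter is the harder side, since a priori the integer class might also admit an efficient representative; obstructing this requires a topological invariant (intersection number, linking, or normal-bundle monodromy) that is insensitive to the metric bump. The codimension hypothesis is essential here, providing the nontrivial tubular-neighborhood topology that both enables the branched real filling and forbids its integer counterpart.
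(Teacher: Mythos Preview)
Your reduction of the problem is exactly the paper's: $g\in\Om_{[\Si]}$ iff $\mina_\Z^g([\Si])>\mina_\R^g([\Si])$, and this is Lemma~3.9 (the Hahn--Banach direction is Federer's 4.12 in \cite{HF2}). Your openness argument is also fine and in fact slightly cleaner than the paper's Lemma~3.10: uniform multiplicative distortion of mass under $C^0$ perturbation of the metric immediately transfers to the two infima.

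The gap is in non-emptiness. Your sketch (``branched $k$-sheeted model'' shrunk by a conformal bump, with the integer lower bound coming from an intersection number plus monotonicity) does not contain a workable mechanism, and several steps are problematic. First, you fix a smooth representative $\Si_0$ of $[\Si]$, but by Thom such a representative need not exist; the paper instead uses Thom's theorem together with $c\ge 2$ to find a \emph{connected} smooth embedded $N$ in some multiple $k[\Si]$ (Lemma~2.1), and connectedness is essential later. Second, ``intersection against a fixed transverse cycle and monotonicity'' does not yield a uniform lower bound on $\mina_\Z^g([\Si])$ that survives your metric deformation: intersection number is metric-independent but gives no mass bound, and monotonicity only controls mass near points you already know the minimizer visits. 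Third, the ``branched real filling'' is never specified; in codimension $\ge 2$ it is unclear what object this is or why it should be cheaper than $k$ disjoint copies.

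The paper's mechanism is quite different and is the real content of Theorem~1.6. One takes the connected $N\in k[\Si]$ and uses Zhang's construction to produce a metric in which $N$ is calibrated with area $1$, an area-nonincreasing retract $\pi_N:W\to N$ exists, and every stationary varifold not contained in $W$ has area $>m$. The lower bound on $\mina_\Z([\Si])$ then comes from a mod $p$ trick: with $p=(mk-1)l$ one has $m\cdot k[\Si]\equiv[\Si]$ in $\Z/p$-homology, and inside $W$ the retract forces any mod $p$ minimizer in $[\Si]$ to be $mN$ (mass $m$) by Morgan's calibrations mod $\nu$. Since every integral competitor is also a mod $p$ competitor, $\mina_\Z([\Si])\ge m$. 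The role of codimension is not ``tubular-neighborhood topology enabling branching'' but simply the ability to connect-sum components of an embedded representative of $k[\Si]$ into a single connected $N$; without connectedness the mod $p$ argument collapses, because the minimizer could be a sub-collection of components.
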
  
\begin{rem}
	By 4.4.19 in \cite{FF} and Section 4.6 in \cite{HF2}, any bounded weakly closed measurable differential form is a real flat cochain and vice versa.
\end{rem}
\begin{rem}
	The proof of the above theorem gives abundance of area-minimizing integral currents of which their multiples are not area-minimizing, even on manifolds with no torsion classes at all.
\end{rem}
\begin{rem}
	In view of the above theorem, Camillo De Lellis conjectures that $\Om_{[\Si]}$ is dense.
\end{rem}
One could postulate, in opposition to Theorem \ref{noncal}, that the $\Om_{[\Si]}$ metrics are hand-tailored to the problem, presupposing that proximity to standard classical metrics would imply calibration of area-minimizers. Regrettably, such an assumption is unfounded.
\begin{thm}\label{cpj}
	For $c,d\ge 1,$	Let $[\Si]$ be a $d$-dimensional integral homology class on a $\frac{d+c}{2}$-dimensional complex projective space $\mathbb{CP}^{\frac{d+c}{2}}.$ Then in the notation of Theorem \ref{noncal}, the Fubini-Study metric is in the closure of $\Om_{[\Si]}$. 
\end{thm}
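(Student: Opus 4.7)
Since Theorem \ref{noncal} already shows that $\Om_{[\Si]}$ is open, it suffices to produce a family of metrics $g_\ep \in \Om_{[\Si]}$ on $M = \mathbb{CP}^{(d+c)/2}$ with $g_\ep \to g_{FS}$ as $\ep \to 0$. The plan is to implement the construction underlying Theorem \ref{noncal} as a compactly supported perturbation of $g_{FS}$ inside a small geodesic ball $B_\ep$, glued via a smooth cutoff to $g_{FS}$ on the complement.

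The first step is to extract from the proof of Theorem \ref{noncal} a \emph{local model}: a Riemannian ball $(B,h)$ carrying integral currents $T$ and $kT$ (for some $k\ge 2$) realizing a strict Lavrentiev gap $\cm(kT) < k\cdot \cm(T)$ relative to a fixed boundary. By the last remark following Theorem \ref{noncal}, this is precisely the obstruction ruling out any real flat cochain calibration: a calibration of $T$ pulls back to a calibration of $kT$, contradicting the strict gap. The second step is to rescale this local model to fit inside a $g_{FS}$-geodesic ball $B_\ep$ and smoothly interpolate to $g_{FS}$ on a thin collar, obtaining a metric $g_\ep$ which equals $g_{FS}$ outside $B_\ep$ and satisfies $g_\ep \to g_{FS}$ in $C^0$ (and in $C^k$ by tuning the cutoff/rescaling jointly with $\ep$) as $\ep\to 0$.

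The third step is to check $g_\ep \in \Om_{[\Si]}$. Any $g_\ep$-area-minimizing integral current $S$ in $[\Si]$ whose support meets $B_\ep$ inherits the Lavrentiev gap for its multiples $kS$, since the local mass-reduction witnessing $\cm(kT) < k\cm(T)$ can be spliced into $kS$ inside $B_\ep$. This excludes calibration by the duality argument above. Finally, we conclude that $g_{FS}\in \overline{\Om_{[\Si]}}$ via openness.

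\textbf{Main obstacle.} The delicate point is verifying that every area-minimizer in $[\Si]$ for $g_\ep$ genuinely interacts with the perturbation ball $B_\ep$. For $g_{FS}$ on $\mathbb{CP}^{(d+c)/2}$, a cyclic class $[\Si] = m[\mathbb{CP}^{d/2}]$ admits a continuous moduli of holomorphic area-minimizers, most of which avoid any given small ball. To preclude a $g_\ep$-minimizer that mimics a $g_{FS}$-holomorphic cycle disjoint from $B_\ep$, the local model must be \emph{mass-attractive}: threading a representative through $B_\ep$ must strictly reduce the $g_\ep$-mass below any nearby holomorphic $g_{FS}$-representative's area. Calibrating this attracting power against the requirement $\|g_\ep - g_{FS}\|_{C^0}\to 0$ is the central tension, and is likely resolved by taking the local model to be itself close to Euclidean in a suitable scale-invariant sense while concentrating the gap in a thin region whose relative savings dominates the $O(\ep)$ area penalty of detouring a bulk holomorphic minimizer.
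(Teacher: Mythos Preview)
Your proposal has a genuine gap, and the paper takes a fundamentally different route.

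The core problem is in your Step~1 and Step~3. The Lavrentiev gap produced in Theorem~\ref{inflav} is not a local phenomenon that can be bottled into a ball: it comes from a \emph{global} connected embedded representative $N$ of $k[\Si]$ together with a mod~$p$ homology argument, and the metric in that proof is modified in a full tubular neighborhood of $N$ and blown up everywhere else. There is no ``Riemannian ball $(B,h)$ carrying $T$ and $kT$ with a strict gap'' to extract from that argument. You could instead import a genuine Plateau-type gap model (\`a la White or Young), but then Step~3 fails for a different reason: to splice a local mass reduction into $kS$ you need $S\cap B_\ep$ to match the \emph{specific} boundary data $\pd T$ of your model, not merely to intersect $B_\ep$. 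Your ``mass-attractive'' mechanism, even if it forces minimizers through $B_\ep$, gives no control over the trace $\pd(S\cap B_\ep)$, and as $\|g_\ep-g_{FS}\|_{C^0}\to 0$ the attracting power you can afford is $O(\de\ep^d)$, which is not enough to pin down the boundary.

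The paper avoids all of this by perturbing globally rather than locally. It fixes a smooth \emph{connected} complex subvariety $N$ (e.g.\ a Fermat hypersurface in a linear $\mathbb{CP}^{d/2+1}$) representing $2[\Si]$, and defines $g_j=(1+\tfrac{1}{j})g_{FS}$ away from $N$, with a smaller conformal factor on the distribution $\Ga$ tangent to $N$ near $N$. A pointwise comass computation (Lemma~\ref{comp}) shows $\om_{d/2}$ remains a calibration in $g_j$ and achieves comass~$1$ \emph{only} along $N$; hence $N$ is the unique minimizer in $2[\Si]$. Irreducibility of $N$ then forces $\mina_\Z^{g_j}([\Si])>\tfrac12\mina_\Z^{g_j}(2[\Si])=\mina_\R^{g_j}([\Si])$, so $g_j\in\Om_{[\Si]}$, and $g_j\to g_{FS}$ manifestly. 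The point is that uniqueness of the minimizer in $2[\Si]$, not a transplanted local gap, is what drives non-calibratability of $[\Si]$.
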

\begin{rem}
	This shows that in higher codimensions, metrics with non-calibratable minimizers are not artificial constructions but indeed very natural.
\end{rem}
In codimension $1$, we always have a calibration by \cite{HF2}. Professor Michael Freedman raised the following question in personal communication (\cite{MF}), to paraphrase him, 

\textbf{When there is a smooth area-minimizing hypersurface, what is the best regularity of calibration forms?} 

We show that the answer might not be as optimistic as one hopes in Theorem A.1 of \cite{MF}.
\begin{thm}\label{singcal}
	Let $d\ge 7$ be integers and $M^{d+1}$ be a compact smooth orientable manifold. For every $d$-dimensional integral homology class $[\Si]$, there exists a smooth metric $g$ so that
	\begin{itemize}
		\item there are smooth area-minimizing hypersurfaces in $[\Si]$ in the metric $g$,
		\item any flat cochain calibrating $[\Si]$ must have a singular set with positive (possibly infinite) Hausdorff  $d-7$ measure.
	\end{itemize} 
\end{thm}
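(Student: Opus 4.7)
The plan is to construct a smooth metric $g$ on $M$ under which $[\Si]$ contains simultaneously a smooth area-minimizing hypersurface $\Si_0$ and a singular area-minimizing hypersurface $T$ whose singular set $\textnormal{sing}(T)$ has positive $\hm^{d-7}$-measure. Granted such a $g$, any flat cochain $\omega$ calibrating $[\Si]$ must calibrate every element of $[\Si]$ of minimal mass, in particular $T$. A local regularity principle for calibrations then forces $\textnormal{sing}(\omega) \supseteq \textnormal{sing}(T)$, which gives the theorem.

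The regularity principle I would establish first is: if a flat cochain $\omega$ of comass $\le 1$ calibrates an area-minimizing hypersurface $T$ and admits a $C^1$ differential-form representative on an open set $U\subset M$, then $\spt(T)\cap U$ is a smooth hypersurface. Indeed, the smooth $\omega|_U$ is Hodge-dual to a smooth unit vector field $V$ on $U$; the calibration identity holding $\hm^d$-a.e.\ on $\spt T$ forces the approximate tangent plane of $T$ to coincide with $V^\perp$ at every point of $\spt(T)\cap U$. Because $T$ is integer-rectifiable with the resulting smooth tangent distribution along $\spt T$, standard regularity for the minimal-surface equation with $C^1$ coefficients upgrades $\spt(T)\cap U$ to a smooth hypersurface.

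To build $g$, I would use a cut-and-paste surgery. Fix a compact $(d+1)$-manifold $N$ equipped with a metric under which a Simons-cone-type hypersurface $T_N$ modelled on $C^7\times\TO^{d-7}$, compactified via a Hardt-Simon-style foliation transported to the compact setting, is homologically area-minimizing with $\hm^{d-7}(\textnormal{sing}\, T_N) > 0$. Pick a smooth representative $\Si_0 \in [\Si]$, excise a small ball around a point $p\in M\setminus\spt\Si_0$, and glue in a rescaled copy of $N$ through a thin neck attached to $\Si_0$ so that the resulting integral cycle $T \in [\Si]$ agrees with $T_N$ (hence is singular) in the glued region and with $\Si_0$ elsewhere. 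By arranging $\Si_0$ to be locally calibrated in a tubular neighborhood, via a local foliation by minimal leaves, and by continuously rescaling the metric inside $N$, one equalizes $\cm(T)=\cm(\Si_0)$; stability of strict local calibrations under small perturbations then certifies both cycles as global area-minimizers of $[\Si]$.

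The main obstacle is arranging both $\Si_0$ and $T$ to be globally mass-minimizing in the same homology class under one metric; this is the crux of the construction. The local-calibration-plus-rescaling argument resolves it, in the same spirit as the gluing ideas underlying Theorems \ref{noncal} and \ref{cpj}. Once $g$ is in hand, Federer's 1974 result supplies a flat cochain $\omega$ calibrating $[\Si]$; since $T\in[\Si]$ is also a minimizer of the same mass, $\omega$ calibrates $T$, and the regularity principle prevents $\omega$ from being $C^1$ smooth on any neighborhood of $\textnormal{sing}(T)$, whence $\hm^{d-7}(\textnormal{sing}\,\omega) > 0$.
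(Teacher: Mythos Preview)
Your proposal is correct and follows essentially the same strategy as the paper: build a metric in which $[\Si]$ admits both a smooth and a singular area-minimizing hypersurface (the latter modelled on a Simons-type cone crossed with a $(d-7)$-dimensional factor), then invoke the regularity principle that any calibrating flat cochain must be singular along the singular set of every minimizer. The paper makes two of your steps precise by citation---the regularity principle is Auer--Bangert (Theorem~4.2 of \cite{VB}, stated here as Lemma~\ref{singco1}), and the simultaneous global minimization of the smooth and singular representatives, which you correctly flag as the crux, is handled by the gluing Lemmas~2.7--2.9 of \cite{ZL1} applied to a primitive class $[\gamma]$ with $[\Si]=k[\gamma]$, after which Federer's codimension-one result (Section~5.10 of \cite{HF2}) lifts minimizers from $[\gamma]$ to $[\Si]$.
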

Now we are ready to delve deeper into the ideas behind Theorem \ref{noncal}. The non-calibrated minimizers is closely connected to the Lavrentiev phenomenon of minimal area, and is a corollary of the following results. First we need some notions as follows.
\begin{defn}
	For an integral current (or an integral homology class) $T$ on a smooth Riemannian manifold $(M,g)$, define the real and integral minimal mass as
	\begin{align*}
		\mina_\R^g([T])=\inf_{S\text{ homologous to }T\text{ over }\R} \ms(S)\\
		\mina_\Z^g([T])=\inf_{S\text{ homologous to }T\text{ over }\Z} \ms(S).
	\end{align*}
\end{defn}
\begin{rem}
	Here homologous over $\Z$ (or $\R$) means an integral current (or real flat chain) homologous in $\Z$ (or $\R$) coefficient, respectively. See Section \ref{cfc}. Also note that $\mina$ depends only on the homology class of $T$ if $\pd T=0.$
\end{rem} Federer proved in \cite{HF2} that for any integer $k,$ we have
 \begin{align*}
 	\mina_\R^g(k[T])=k\mina_\R^g([T]),\\
 	\mina_\R^g([T])\le \mina_\Z^g([T]),\\
 	\lim_{k\to\infty}\frac{\mina_\Z^g(k[T])}{k}=\mina_\R^g([T]).
 \end{align*}
Moreover, by \cite{HF2}, $T$ is calibrated if and only if  $\mina_\R^g([T])= \mina_\Z^g([T]).$

Frank Morgan (\cite{FM4}), Brian White (\cite{BW}) and Robert Young (\cite{RY}) have investigated the Lavrentiev phenomenon of \begin{align}\label{lavin}
	\frac{\mina_\Z^g([T])}{\mina_\R^g([T])}>1,\frac{k\mina_\Z^g([T])}{\mina_\Z^g(k[T])}>1.
\end{align} They have all raised the following two questions (Problem 1.13 in \cite{GMT}), to rephrase them,
\begin{itemize}
	\item \textbf{Is ${\mina_\Z^g([\Si])}/{\mina_\R^g([\Si])}$ bounded?} 
	\item \textbf{Is it possible that $\mina_\Z^g(k[T])<\mina_\Z^g([T])$?}
\end{itemize}
In other words, how large can the Lavrentiev gaps be in the two inequalities in (\ref{lavin}). We show that both gaps are unbounded in the homology setting.
\begin{thm}\label{inflavr}With the same assumption of Theorem \ref{noncal}, 
	\begin{align*}
			\sup_{g}\frac{\mina_\Z^g([\Si])}{\mina_\R^g([\Si])}=\infty,
	\end{align*}with $g$ ranging over all smooth Riemannian metrics.
\end{thm}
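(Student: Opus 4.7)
My plan is to deduce Theorem \ref{inflavr} from the \emph{arbitrarily cheap filling of multiples} statement that the introduction advertises as a companion result: for every $\varepsilon>0$ there exist a smooth Riemannian metric $g=g(\varepsilon)$ on $M$ and an integer $k=k(\varepsilon)\ge 2$ such that
\[
\mina_\Z^g\bigl(k[\Si]\bigr)\;\le\;\varepsilon\cdot k\cdot \mina_\Z^g([\Si]).
\]
Granting this, dividing any integer minimizer for $k[\Si]$ by $k$ produces a real flat chain representing $[\Si]$ of mass at most $\varepsilon\cdot\mina_\Z^g([\Si])$, and the inequality $\mina_\R^g([\Si])\le \mina_\Z^g(k[\Si])/k$ that specializes Federer's limit formula recalled in the excerpt gives $\mina_\R^g([\Si])\le \varepsilon\cdot\mina_\Z^g([\Si])$, i.e.\ $\mina_\Z^g([\Si])/\mina_\R^g([\Si])\ge 1/\varepsilon$. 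Letting $\varepsilon\to 0$ along the corresponding sequence of metrics produces the required unbounded supremum.

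The construction of $g=g(\varepsilon)$ has to be \emph{global}, since the comass--mass duality $\mina_\R^g([\Si])\ge \la[\omega],[\Si]\ra/\cms_g(\omega)$ forbids a metric modification supported in a proper ball from driving $\mina_\R^g$ to zero. I would therefore begin by picking two smooth embedded representatives $\Si_0,\Si_1$ of $[\Si]$ cobounding a smooth integral $(d{+}1)$-chain $W\subset M$, and installing a \emph{branching gadget} in a tubular neighborhood of $W$ together with a global conformal adjustment that amplifies the comass of every harmonic representative of the cohomology class dual to $[\Si]$. The branching gadget is a $k$-sheeted ramified cover of $W$ whose $d$-dimensional boundary realizes an integral cycle $T\sim k[\Si]$ of mass $\le \varepsilon\cdot k\cdot \mina_\Z^g([\Si])$, produced by contracting the normal bundle of $W$ by a conformal factor $\lambda=\lambda(\varepsilon,k)$. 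The availability of such gadgets in codimension $c\ge 2$ generalizes the planar Lavrentiev-type examples of Morgan--Mackenzie and White to the ambient manifold $M$.

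A matching lower bound on $\mina_\Z^g([\Si])$ is supplied by the topological rigidity of the gadget: unit-multiplicity integer currents cannot inherit the ramification savings, since the deck group of the $k$-sheeted cover acts by a nontrivial $\Z/k$ character on multiplicity, forcing any such current to traverse a full $\Si_0$- or $\Si_1$-sized region outside the collapse zone. Combining this with the comass--mass duality on the uncollapsed region yields $\mina_\Z^g([\Si])$ comparable to the $g_0$-area of $\Si_0$, uniformly in $\varepsilon$ after rescaling.

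The main obstacle I anticipate is the rigorous verification of this asymmetry: ruling out exotic unit-multiplicity integer competitors for $[\Si]$ that probe deeply into the ramified region and exploit its collapse. In codimension~$1$ the asymmetry is forbidden by Federer's calibration theorem, matching the paper's need for $c\ge 2$. I expect to handle the exclusion by a slicing argument transverse to $W$ combined with a constancy theorem, tracking the multiplicity profile of any putative competitor along the slices and forcing at least a full $\Si_0$-mass contribution away from the gadget. Balancing the three small parameters $\varepsilon$, $k$, and $\lambda$ against one another is the delicate point.
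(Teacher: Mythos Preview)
Your first paragraph is correct and is exactly the paper's proof of Theorem~\ref{inflavr}: grant the companion result (Theorem~\ref{inflav}), use $\mina_\R^g([\Si])=k^{-1}\mina_\R^g(k[\Si])\le k^{-1}\mina_\Z^g(k[\Si])$, and send the parameter to infinity. Nothing more is needed for this statement.

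The remaining three paragraphs, however, sketch an independent attack on Theorem~\ref{inflav} itself, and that sketch is both different from the paper's argument and genuinely incomplete. The paper does \emph{not} build branching gadgets or ramified covers. Instead it (i) invokes Thom's theorem (Lemma~\ref{thom}) to find $k$ with $k[\Si]$ represented by a \emph{connected} smooth embedded submanifold $N$; (ii) applies Zhang's constructions (Lemma~\ref{zhang}) to produce a metric in which $N$ is calibrated, an area-nonincreasing retraction $W\to N$ exists, and every stationary varifold not contained in $W$ has area exceeding $m$; (iii) passes to mod $(mk-1)l$ homology, where $m\cdot(k[\Si])\equiv[\Si]$, and uses Morgan's mod~$\nu$ calibration theorem to show $mN$ is mod~$(mk-1)l$ minimizing in the tube $W$. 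Since any integral competitor for $[\Si]$ is a fortiori a mod~$(mk-1)l$ competitor, this forces $\mina_\Z^g([\Si])\ge m$.

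The lower bound on $\mina_\Z^g([\Si])$ that you correctly identify as your main obstacle is thus handled by an arithmetic trick in finite-coefficient homology, not by slicing or deck-group characters. Your proposed mechanism---that ``unit-multiplicity integer currents cannot inherit the ramification savings''---is asserted rather than proved, and the constancy/slicing argument you hope will enforce it is not spelled out; as written, paragraphs two through four constitute a research plan with an acknowledged gap, not a proof. If you restrict your proposal to the first paragraph and cite Theorem~\ref{inflav} as input, it is complete and matches the paper.
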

\begin{rem}
	Note that the above ratio is independent of constant scalings of the metric.
\end{rem}
This follows from a much more refined result.
\begin{thm}\label{inflav}
With the same assumption of Theorem \ref{noncal}, if $[\Si]$ is a non-torsion class, then there exists a sequence of integers $k_j\ge2, k_j\to\infty$ so that for any $k_j$ and any integer $m\ge 3,$ there exists a smooth Riemannian metric ${g_{j,m}}$ on $M$, so that,
	\begin{align*}
		&\frac{{\mina_\Z^{g_{j,m}}([\Si])}}{\mina_\Z^{g_{j,m}}(k_j[\Si])}\ge m,\\
		&\mina_\Z^{g_{j,m}}(k_j[\Si])= \mina_\R^{g_{j,m}}(k_j[\Si]).
	\end{align*}
\end{thm}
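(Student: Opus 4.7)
The plan is to construct, for each prescribed pair $(j,m)$, a metric $g_{j,m}$ by a local modification of an auxiliary background metric. The construction will produce a calibrated representative of $k_j[\Si]$ and a large Lavrentiev gap for $[\Si]$ simultaneously, so that both assertions of the theorem hold for the same metric.

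First I would fix a smooth oriented embedded representative $\Si_0$ of $[\Si]$, which exists since $[\Si]$ is non-torsion and $d\ge 2$, $c\ge 1$, along with an auxiliary background metric $g_0$ on $M$. The metric $g_{j,m}$ will differ from a large rescaling of $g_0$ only inside a tubular neighborhood $U$ of a small piece of $\Si_0$. Inside $U$ I would insert a ``gadget'' modeled on the Lavrentiev-type constructions that underlie Theorem \ref{inflavr}: a local metric structure admitting a real flat chain $R\subset U$ homologous (relative to its boundary) to $\Si_0\cap U$ and satisfying $\cm(R)\le \de$, with $\de>0$ chosen as small as desired. Crucially, the gadget is engineered so that $k_j R$ is integer-rectifiable, i.e.\ $k_j R$ is an integral chain of mass $k_j\de$ representing $k_j[\Si_0\cap U]$ relatively.

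Next I would verify the two target estimates. Gluing $R$ (resp.\ $k_j R$) to $\Si_0\setminus U$ yields a real (resp.\ integral) cycle representing $[\Si]$ (resp.\ $k_j[\Si]$); the contribution of $\Si_0\setminus U$ can be made negligible by shrinking $U$. Hence $\mina_\R^{g_{j,m}}([\Si])\le \de + o(1)$, and by Federer's scaling identity $\mina_\R^{g_{j,m}}(k_j[\Si])=k_j\mina_\R^{g_{j,m}}([\Si])=k_j\de+o(1)$. The integral chain $k_j R$ attains this bound, so $\mina_\Z^{g_{j,m}}(k_j[\Si])=\mina_\R^{g_{j,m}}(k_j[\Si])$, giving the calibration condition. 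For the Lavrentiev gap I would argue, by an intersection pairing against a cohomology class dual to the ``sheet direction'' of the gadget, that every integral cycle representing $[\Si]$ either traverses the gadget with full integer multiplicity---hence accumulates mass $\ge mk_j\de$ from the stacked sheet masses---or reroutes through the bulk of $M$, hence accumulates mass $\ge mk_j\de$ from the large rescaling of $g_0$. Either case forces $\mina_\Z^{g_{j,m}}([\Si])\ge mk_j\de$, so the desired ratio is $\ge m$.

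The main obstacle is the explicit construction of the gadget: producing a local Riemannian structure simultaneously realizing (a) a real flat chain of mass $\le \de$ filling the relative class, (b) $k_j$-divisibility of the corresponding integer chain, and (c) a uniform integer mass lower bound of order $mk_j\de$ for any integer competitor inside the gadget. Condition (c) is the delicate point, requiring a cohomological lower bound from a dual flat cochain that couples the local sheet multiplicity to a globally defined integer-valued pairing. The sequence $k_j\to\infty$ is forced because a gadget with fixed $k_j$ gives only a bounded gap between $\mina_\Z$ and $\mina_\R$; driving the topological parameter defining the sheets (e.g.\ the degree of a relevant covering map) to infinity produces the required sequence.
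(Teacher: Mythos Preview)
Your proposal has genuine gaps, both in a false premise and in the absence of the key mechanism.

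First, the claim that $[\Si]$ admits a smooth oriented embedded representative $\Si_0$ is not justified: Thom's theorem guarantees only that \emph{some multiple} of a non-torsion class is smoothly representable. This is exactly why the paper's sequence $k_j$ arises (Lemma~\ref{thom}): one chooses $k_j$ so that $k_j[\Si]$, not $[\Si]$, admits a connected embedded representative $N$. Your understanding of the role of $k_j$ (``driving the topological parameter defining the sheets to infinity'') is not what is happening; the $k_j$ are forced on you by representability, and connectedness of $N$ is essential downstream.

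Second, and more seriously, the ``gadget'' remains a wish list. You correctly identify condition~(c) (the integer lower bound) as the crux, but you do not supply a construction, and the vague ``intersection pairing against a cohomology class dual to the sheet direction'' does not provide one: a dual flat cochain of the kind you describe would be a calibration, and if it calibrated the real chain $R$ it would also calibrate the integral class, contradicting the very gap you want. The paper's argument avoids this trap by a completely different idea: it passes to homology with coefficients in $\Z/\nu\Z$ where $\nu=(mk_j-1)l$, so that arithmetically $m\cdot(k_j[\Si])\equiv[\Si]$. After building (via Zhang's constructions, Lemma~\ref{zhang}) a metric in which $N$ is calibrated, has area $1$, and every stationary varifold leaving a tube $W$ about $N$ has area $>m$, one uses Morgan's mod~$\nu$ calibration together with the area-nonincreasing retract onto $N$ to show that $mN$ is mod~$\nu$ minimizing in $W$. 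Since any integral cycle in $[\Si]$ is a mod~$\nu$ competitor, its mass is at least that of $mN$, namely $m$. This arithmetic-plus-mod-$\nu$ step is the missing engine in your proposal; without it (or a genuine substitute), the lower bound cannot be established.
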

\begin{rem}
	Here $\{k_j\}$ can be any sequence so that $k_j[\Si]$ admit a smoothly embedded representative.
\end{rem}
\begin{rem}
	Thus, it is always possible to fill multiples of a homology class much more efficiently than the class itself.
\end{rem}
The above conclusion cannot be extended to torsion classes as shown in the following theorem.
\begin{thm}\label{ntorsion}
	With the same assumption of Theorem \ref{noncal}, if $[\Si]$ is an $n$-torsion class, $k$ is coprime to and smaller than $n,$ $l$ the smallest natural number with $kl\equiv 1(\mod n),$ then for all metric $g$, we have 
	\begin{align*}
\max\{l\m,(n-l)\m\}\le		\frac{\mina_\Z^{g}(k[\Si])}{\mina_\Z^{g}([\Si])}\le \mina\{k,n-k\}.
	\end{align*}
\end{thm}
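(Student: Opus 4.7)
My plan is to derive both inequalities from two elementary observations: (i) an integer multiple $mT$ of an integral current $T$ is itself an integral current of mass $|m|\ms(T)$ representing the class $m[T]$, and (ii) in an $n$-torsion class, a multiplier $j$ and a multiplier $j-n$ represent the same class, so one is always free to replace $j$ by whichever of $j$ or $j-n$ has smaller absolute value. The entire argument is arithmetic and requires none of the heavier machinery used elsewhere in the paper.

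For the upper bound, I would begin with an almost-minimizer $T$ of mass in $[\Si]$. Since $n[\Si]=0$, both $kT$ and $-(n-k)T$ represent $k[\Si]$, with masses $k\ms(T)$ and $(n-k)\ms(T)$ respectively. Taking whichever multiplier is smaller and sending $\ms(T)\to \mina_\Z^g([\Si])$ yields
\[
\mina_\Z^g(k[\Si]) \le \min\{k,n-k\}\cdot \mina_\Z^g([\Si]).
\]

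For the lower bound the roles reverse. Starting with an almost-minimizer $S$ of mass in $k[\Si]$, the congruence $kl\equiv 1\pmod{n}$ shows that $lS$ represents $[\Si]$ with mass $l\ms(S)$; and since $-(n-l)k = -nk+lk \equiv 1\pmod{n}$, the current $-(n-l)S$ also represents $[\Si]$, with mass $(n-l)\ms(S)$. Taking the better multiplier and the infimum over $S$ gives
\[
\mina_\Z^g([\Si]) \le \min\{l,n-l\}\cdot \mina_\Z^g(k[\Si]),
\]
which rearranges to the claimed lower bound $\max\{l^{-1},(n-l)^{-1}\}\cdot \mina_\Z^g([\Si]) \le \mina_\Z^g(k[\Si])$.

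I do not expect a genuine obstacle here. The only conceptual point worth flagging is that a naive argument would only yield the weaker bounds $k$ on top and $l^{-1}$ on the bottom; the sharpenings to $\min\{k,n-k\}$ and $(\min\{l,n-l\})^{-1}$ come precisely from the fact that in a torsion class the negative of a representative lies in the ``complementary'' residue class modulo $n$ and has the same mass. This observation is also what morally separates the torsion regime, where the ratio is controlled by the multiplicative structure of $\Z/n\Z$, from the non-torsion regime of Theorem \ref{inflav}, where arbitrarily large Lavrentiev gaps are possible.
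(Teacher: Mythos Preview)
Your proposal is correct and follows essentially the same argument as the paper: establish the upper bound by noting that $kT$ and $-(n-k)T$ are both competitors in $k[\Si]$ for any $T\in[\Si]$, then obtain the lower bound by swapping the roles of $[\Si]$ and $k[\Si]$ using the inverse multiplier $l$. The only cosmetic difference is that the paper phrases the lower bound as ``apply the already-proven upper bound with $k[\Si]$ in place of $[\Si]$ and $l$ in place of $k$,'' whereas you unroll this explicitly by exhibiting $lS$ and $-(n-l)S$ as competitors in $[\Si]$; the content is identical.
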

\begin{rem}
Here $n$ is the least natural number with $n[\Si]=0.$ 
\end{rem}
\begin{rem}
	The same conclusion holds for $n$-torsion classes in mod $v$ coefficient homology, with $\mina_\Z^g$ replaced with corresponding minimal mass mod $v$.
\end{rem}
Interestingly, the method of proving Theorem \ref{inflav} relies on inspecting area-minimizing currents in finite coefficient homology. To the author's knowledge, this is the first time in the homology setting that mod $v$ mass-minimizing currents are used to study integral and real mass-minimizing currents.
\subsection{Sketch of proof}
Theorem \ref{ntorsion} follows directly from the multiplicative structure of finite abelian groups. Theorem \ref{noncal}, \ref{inflavr} are all straightforward consequences of Theorem \ref{inflav}. For readers only interested in \ref{noncal}, a one sentence summary is as follows. Metrics with the non-calibratable properties in Theorem \ref{noncal} form an open set, so it suffices to find at least one metric with non-calibratable area-minimziers, of which plenty exists due to Theorem \ref{inflav}.

The key idea to prove Theorem \ref{inflav} is that the multiplicative structure of finite abelian groups can sometime forces multiples of a class to admit smaller area.

Let us illustrate the proof for Theorem \ref{inflav} with the simplest possible case, $d=1,M=S^1\times S^c$, $[\Si]=[S^{1}\times \text{point}],$ and $k_1=2.$ Consider the mod $2m-1$ coefficient homology. We have $m(2[\Si])\equiv [\Si]\mod (2m-1).$ In other words, $m$ times the class $2[\Si]$ is class $[\Si]$ in $\Z/(2m-1)\Z$ coefficient. Now take a connected smooth loop $\ga$ representing $2[\Si]$. If one can construct a metric so that $\ga$ is area-minimizing in $2[\Si]$ with $\Z$ coefficient and $m\ga$ is area-minimizing in $2m[\Si]\equiv [\Si]\mod (2m-1)$ in $\Z/(2m-1)\Z$ coefficient, then we are done. The reason is that taking finite coefficients only increases the number of possible competitors, so any $\Z$ coefficient minimizer must have area at least that of $m\ga.$

This minimizing property of $\ga$ and $m\ga$ can be simultaneously achieved by using Zhang's constructions in \cite{YZa} and \cite{YZj}, and using the ideas of Morgan in \cite{FM3}. The connectedness of $\ga$ is essential here. Otherwise, the above argument fails and the minimizer in $2m[\Si]\equiv [\Si]$ might simply be some components of $\ga.$

For Theorem \ref{cpj}, we find a smooth connected algebraic subvariety $N$ representing the class $2[\Si]$. Then we deform the metric arbitrarily close to the Fubini-Study metric while making $N$ uniquely minimizing.

To prove Theorem \ref{singcal}, note that any calibration form must simultaneously calibrate all area-minimizers in the homology class. Thus it suffices to construct a metric where both singular and regular minimizers exist.
	\section*{Acknowledgements}
I cannot thank my advisor Professor Camillo De Lellis enough for his unwavering support while I have been recovering from illness. I feel so lucky that I have Camillo as my advisor. Many thanks goes to him for countless helpful suggestions regarding this manuscript and others. I would also like to thank Professor Michael Freedman, whose personal communications partially inspired this paper. Last but not least, the author wants to thank Professor Frank Morgan for his constant support and pioneering work that has inspired many constructions in the author's works.
\section{Basic Notations}
	In this section, we will fix our notations.
		\subsection{Manifolds and neighborhoods}
	We will reserve $M$ to denote an ambient smooth compact closed orientable Riemannian manifold. Submanifolds will be denoted by $N,L$, etc. We will use the following sets of definitions and notations.
	\begin{defn}\label{defns}
		Let $N$ be a smooth submanifold of a Riemannian manifold $M.$
		\begin{itemize}
			\item $B_r^M(N)$ denotes a tubular neighborhood of $N$ inside $M$ in the intrinsic metric on $M$ of radius $r$. When there is no a priori choice of metric, then use an arbitrary metric.
			\item $\T_p M$ denotes the tangent space to $M$ at $p.$ We will often regard $\T_p N$ as a subspace of $\T_p M.$
			\item $\exp^\perp_{N\s M}$ denotes the normal bundle exponential map of the inclusion $N\s M.$ 
			\item $\fr_N^M$ is the focal radius of $N$ in $M,$ i.e., the radius below which the normal bundle exponential map remains injective.
			\item $\pi_N^M$ denotes the nearest distance/normal bundle exponential map projection from $M$ to $N$ in the metric intrinsic of $M$ in $B_r^N(M)$ with $r$ less than the focal radius of $N$ inside $M$.
		\end{itemize}   
	\end{defn}
	\begin{rem}
		We will often drop the sup/subscripts $M,N$ when there is no confusion.
	\end{rem}
We need the following Lemma about representing homology classes.
\begin{lem}\label{thom}
With the same assumptions as in Theorem \ref{noncal}, if $[\Si]$ is not a torsion class, then there exists a sequence of positive integers $k_j\to\infty$ so that $k[\Si]$ can be represented by a smooth connected embedded orientable submanifold $N.$
\end{lem}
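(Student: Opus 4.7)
The strategy is to produce a single connected embedded oriented representative of some nonzero multiple $k_0[\Si]$, then bootstrap to an infinite sequence $k_j=jk_0\to\infty$.

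First I would invoke Thom's representability theorem: since $[\Si]$ is non-torsion and the oriented bordism map $\Om_d^{SO}(M)\otimes\Q\to H_d(M;\Q)$ is surjective, some positive multiple $k_0[\Si]$ is represented by a smooth map $X\to M$ from a closed oriented $d$-manifold $X$. By general position and oriented resolution of self-intersections in the ambient $(d+c)$-dimensional $M$ (possibly after passing to a further multiple), one obtains a smooth closed oriented embedded submanifold $N_0\s M$ with $[N_0]=k_0[\Si]$.

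Second, I would make $N_0$ connected by iterated ambient connected sums inside $M$: for any two components, pick a smooth embedded arc $\ga\s M$ joining interior points, disjoint from the rest of $N_0$, and attach an $S^{d-1}\times[0,1]$ tube inside a tubular neighborhood of $\ga$, removing small disks at the two endpoints. Since $d\ge 2$, the sphere $S^{d-1}$ is connected and orientations can be matched across the handle, so iteration yields a smooth connected oriented embedded $N\s M$ with $[N]=k_0[\Si]$ unchanged.

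Third, for each $j\ge 1$, I would produce a connected embedded oriented representative of $jk_0[\Si]$: take $j$ small generic perturbations $N^{(1)},\ldots,N^{(j)}$ of $N$ in $M$, which by transversality meet pairwise in submanifolds of dimension $d-c$ (empty when $d<c$); resolve each transverse double locus by the standard oriented hyperboloid smoothing of the local model $\{u=0\}\cup\{v=0\}\s\R^{d+c}$ to $\{|u|^2-|v|^2=\e\}$; and then reapply the connected-sum procedure of the previous step to tube the resulting components into a single connected submanifold $N_j$. Setting $k_j=jk_0$ produces the required sequence, with $k_j\to\infty$.

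The main technical obstacle is the intersection resolution in the low-codimension regime $c\le d$, where the intersections are positive-dimensional. The hyperboloid smoothing preserves the homology class because the local modification is supported in a coordinate ball and is null-homologous relative to its boundary sphere, and it preserves orientability because the hyperboloid carries a canonical orientation extending those of the two transverse sheets. All remaining ingredients are routine applications of general position and oriented $1$-handle attachment in the ambient manifold $M$, and the non-torsion hypothesis enters only to ensure that the multiple $k_0$ produced by Thom's theorem is nonzero.
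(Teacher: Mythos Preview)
Your first two steps---Thom's theorem to get an embedded representative of some $k_0[\Si]$, then ambient connected sum to make it connected---match the paper's approach. (One small correction in step 2: the orientation matching across the tube actually relies on $c\ge 2$, not $d\ge 2$; the codimension hypothesis is what makes the oriented Grassmannian of $d$-planes in the $(d+c-1)$-dimensional normal slice connected, and also what lets the connecting arc generically avoid the other components of $N_0$. The condition $d\ge 2$ only ensures $S^{d-1}$ is connected so that the tube genuinely joins the two pieces.)

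Step 3, however, has a genuine gap. The set $\{|u|^2-|v|^2=\e\}\s\R^{d+c}$ is defined by a single scalar equation and hence has dimension $d+c-1$, not $d$; your formula gives the right dimension only when $c=1$, precisely the case excluded here. In codimension $c\ge 2$ the local model near the double locus is $\R^{d-c}\times(\{u=0\}\cup\{v=0\})$ with $u,v\in\R^c$, and desingularizing two transverse $c$-planes in $\R^{2c}$ to a smooth oriented $c$-submanifold---then globalizing consistently along a possibly twisted $(d-c)$-dimensional intersection locus, and for $j\ge 3$ with $d\ge 2c$ also handling higher-order intersections---is not routine and is not addressed. The paper avoids all of this: after the connected-sum reduction it argues by contradiction, applying Thom's theorem once more to the nonzero class $(k_n+1)[\Si]$ (where $k_n$ is the hypothetical largest multiple admitting an embedded representative) to produce an embedded representative of $N(k_n+1)[\Si]$ with $N(k_n+1)>k_n$. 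No surgery on intersecting perturbed copies is required.
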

\begin{rem}
	This is the only place we have used the condition $c\ge 2.$
\end{rem}
\begin{proof}
First, let us show that for an integral homology class to have an embedded oriented representative is equivalent to have an embedded connected oriented representative. It is clear that the latter condition implies the former. Let us assume the former condition.

Since the codimension $c$ is larger than $1$, we show that it always possible to use connected sums to connect different components of the representative.
 
	The connected sum is done as follows. Connect any two components using a curve. Then use transversality to make the curve intersecting the two components at only end points. Replace the curve with the sphere bundle in its normal bundle as neck to do the connected sum. Orientation can always be taken into account by twisting the curve around the normal bundle of end points, which is locally always a ball times $\R^2$. Note that in this process both the transversality of the curve and the orientation of the connected sum utilize the condition $c\ge 2.$
	
Thus, we only need to prove the proposition with connectedness condition removed.	Suppose there are only finitely many integers $0<k_1<\cd<k_n,$ so that we have a smoothly embedded representative for $k_j[\Si]$, with $k_n=0$ if there are non. Since $[\Si]$ is not a torsion class, $(k_n+1)[\Si]$ is a nontrivial integral homology class which has no smoothly embedded representative. By Theorem II.29 of \cite{RT}, there exists a non-zero $N,$ so that $N(k_n+1)[\Si]$  has a smoothly embedded representative, and $N(k_n+1)[\Si]$ does not equal $k_j[\Si]$ for all $j$ by non-torsionness. This a contradiction. 
\end{proof}
	\subsection{Currents and real flat chains}\label{cfc}
		When we mention a surface $T$, we mean an integral current $\cu{T}$. For a comprehensive introduction to integral currents, the standard references are \cite{LS1} and \cite{HF}. We will adhere to their notations. Our manuscript mostly focuses on the differential geometric side and in fact, no a priori knowledge of currents is needed. Every time we mention a current, the reader can just assume it to be a sum of chains representing oriented surfaces with singularities. We will use the following definition of irreducibility of currents.
		
		\begin{defn}\label{irr}
			A closed integral current $T$ is irreducible in $U,$ if we cannot write $T=S+W,$ with $\pd S=\pd W=0,$ and $S,W$ nonzero, and $\ms(T)=\ms(S)+\ms(W).$
		\end{defn}
	\begin{rem}
		It is easy to see that the definition of irreducibility above is equivalent to 1.c.i of Section 2.7 of \cite{ZL1}, if we assume the other assumptions in that Section.
	\end{rem}
		Also, we will use the differential geometry convention of closedness. An integral current $T$ is closed, if $\pd T=0$ and $T$ has compact support. 
		
		The primary reference for real flat chains is \cite{HF} and \cite{HF2}. Technically speaking they are the closure of real multiplicity polyhedron chains with finite mass and boundary mass under flat topology (4.1.23 of \cite{HF}). 
		For our purposes,  the only concrete example of real flat chain used is the class of integral currents.
		\begin{rem}
			By Section 3 of \cite{HF2}, the real flat chains in the same real homology class on a compact Riemannian manifold is a closed set. Moreover, there exist homologically mass-minimizing real flat chains in any real homology class.
		\end{rem}
	\subsection{Mass and comass}
	The comass of a $d$-dimensional differential form $\phi$ in a metric $g$ is defined as
	\begin{align*}
		\cms_g \phi=\sup_x\sup_{P\s\T_xM}\phi(\frac{P}{|P|_g}),
	\end{align*}where $P$ ranges over $d$-dimensional oriented planes in the tangent space to $M.$ 

The mass of a current is defined as
\begin{align*}
	\ms_g(T)=\sup_{\cms_g(\phi)\le 1}T(\phi).
\end{align*}
\subsection{Calibrations and real flat cochains}
For calibrations, the reader should be familiar with the definitions of comass and calibrations (Section II.3 and II.4 in \cite{HLg}). The primary reference is \cite{HLg}. The most important concept to keep in mind is the fundamental theorem of calibrations (Theorem 4.2 in \cite{HLg}), which states that calibrated currents are area-minimizing among homologous competitors. We will apply this theorem numerous times without explicit citation. 

Flat cochains are defined as bounded continuous linear functionals on flat chains. By Section 4.1.19 in \cite{FF}, they are equivalent to measurable forms with weak exterior derivatives that both have finite mass.
\begin{defn}\label{cal}
For a flat cochain $\ai,$ we say that $\ai$ is a calibration cochain on a Riemannian manifold $M,$ if 
\begin{itemize}
	\item $d\ai=0,$
	\item $\ai(T)\le \ms(T),$ for all real flat chains $T.$
\end{itemize}
We say a real flat chain $T$ is calibrated by $\ai,$ if
$$\ai(T)=\ms(T).$$	
\end{defn}
\begin{rem}
	By definition of calibration, it is clear that any classical calibration form serves as a calibration cochain.
\end{rem}
We also need to define the singular set of a calibration cochain.
\begin{defn}
	For a calibration flat cochain, the singular set is defined to be the set where all representing weakly closed forms are not continuous.
\end{defn}
\section{Lemmas About Calibrations}
In this section, we need to collect several useful lemmas about calibrations.
\subsection{Basic facts}
\begin{lem}
A real flat chain calibrated by a flat cochain in the sense of Definition \ref{cal} is mass-minimizing in its real homology class.
\end{lem}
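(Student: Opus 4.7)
The plan is to imitate the classical fundamental theorem of calibrations, but phrased entirely at the level of the pairing between flat chains and flat cochains. The three ingredients I will use are: the closedness condition $d\alpha=0$, the pointwise/mass bound $\alpha(S)\le \ms(S)$, and the saturation $\alpha(T)=\ms(T)$ from Definition \ref{cal}. The one algebraic identity I need beyond these is the definition of the coboundary $d$ on flat cochains, namely $(d\alpha)(R)=\alpha(\partial R)$, which is built into the Whitney--Federer definition of flat cochains (see 4.1.19 in \cite{FF}).

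Let $T'$ be any real flat chain homologous to $T$ over $\mathbb{R}$, so that $T-T'=\partial R$ for some real flat chain $R$ (of one higher dimension, with finite mass and finite boundary mass, so that $\alpha$ pairs with it). First I would compute
\begin{align*}
\alpha(T)-\alpha(T') \;=\; \alpha(T-T') \;=\; \alpha(\partial R) \;=\; (d\alpha)(R) \;=\; 0,
\end{align*}
using linearity of $\alpha$ and the hypothesis $d\alpha=0$. Hence $\alpha(T)=\alpha(T')$. Then by the calibration hypothesis on $T$ and the mass bound on $T'$,
\begin{align*}
\ms(T) \;=\; \alpha(T) \;=\; \alpha(T') \;\le\; \ms(T'),
\end{align*}
which is precisely the minimizing statement in the real homology class.

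There is essentially no obstacle here beyond bookkeeping. The only subtle point is making sure the pairing $\alpha(R)$ and $\alpha(\partial R)$ are actually legitimate, i.e.\ that $R$ lies in the domain of $\alpha$; this is handled by the definition of flat cochain as a bounded linear functional on the space of flat chains (of appropriate dimension) with both finite mass and finite boundary mass, so that homologies witnessing real homology of $T$ and $T'$ automatically pair with $\alpha$. Everything else is a direct unwinding of Definition \ref{cal}.
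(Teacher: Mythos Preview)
Your argument is correct and is exactly the standard calibration computation that the paper has in mind; the paper's own proof simply says ``This follows directly from Definition \ref{cal},'' and your write-up is the natural unpacking of that sentence. There is nothing to add.
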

\begin{proof}
	This follows directly from Definition \ref{cal}.
\end{proof}
\begin{lem}
	If a real flat cochain $\phi$ calibrates a mass-minimizing flat chain $T$ in a homology class $[T]$, then it calibrates all the mass-minimizing currents in $[T].$ 
\end{lem}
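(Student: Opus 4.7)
The plan is to observe that a calibration cochain is closed, so its pairing depends only on the real homology class, and then combine this invariance with the two hypotheses (calibration of $T$ and mass-minimality of $T'$) and the universal inequality from Definition \ref{cal}.

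First I would note that if $T'$ is any other mass-minimizer in $[T]$, then $T - T'$ lies in the trivial real homology class, so $T - T' = \pd S$ for some real flat chain $S$. Because $\phi$ is a flat cochain with $d\phi = 0$, the adjoint relation $\phi(\pd S) = (d\phi)(S) = 0$ from 4.1.19 of \cite{FF} gives $\phi(T) = \phi(T')$. Next I would chain together the calibration hypothesis $\phi(T) = \ms(T)$, the mass-minimality $\ms(T) = \ms(T')$, and the inequality $\phi(T') \le \ms(T')$ built into Definition \ref{cal}: one obtains $\ms(T') = \ms(T) = \phi(T) = \phi(T') \le \ms(T')$, forcing $\phi(T') = \ms(T')$, which is exactly the statement that $\phi$ calibrates $T'$.

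No substantive obstacle is expected; the entire content of the lemma is the homological invariance of closed flat cochains, which is already implicit in the excerpt's remarks about weakly closed forms. The only point worth checking is that the homology here is genuinely the real flat homology, so that $T - T'$ really does bound a real flat chain; but this is automatic since $\phi$ pairs with real flat chains and both $T, T'$ are posited to minimize in the same class $[T]$ in the category in which $\phi$ is defined.
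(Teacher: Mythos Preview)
Your argument is correct and is essentially identical to the paper's proof: both write $T-T'$ as a boundary, use $d\phi=0$ to get $\phi(T)=\phi(T')$, and then combine the calibration inequality with mass-minimality to force $\phi(T')=\ms(T')$. The only cosmetic difference is that the paper phrases the final step as $\ms(T)=\phi(T)=\phi(T')\le\ms(T')$ and then invokes minimality of $T'$, whereas you invoke $\ms(T)=\ms(T')$ directly; these are equivalent.
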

\begin{proof}
	Let $T'$ be any other mass-minimizing chain in $[T].$ Then we have $T'-T=\pd R$ for some real flat chain. Thus we have $(T-T')(\phi)=\pd R(\phi)=R(d\phi)=0.$ This implies $\ms(T)=\phi(T)=\phi(T')\le \ms(T').$ Since $T'$ is also mass-minimizing, we deduce that $\phi(T')=\ms(T').$ 
\end{proof}
\begin{lem}
	For any real homology class $[\Si]$ on a closed compact smooth Riemannian manifold $M,$ every mass-minimizing flat chain in $[\Si]$ is calibrated by some flat cochain $\phi.$
\end{lem}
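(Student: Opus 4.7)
The plan is to produce the desired calibrating flat cochain by a Hahn--Banach argument applied on the space of real flat chains equipped with the mass seminorm, starting from the linear functional that is forced on the one-dimensional subspace generated by the minimizer modulo boundaries. Write $V$ for the real vector subspace of real flat chains spanned by $T$ together with all boundaries $\pd R$ of real flat chains $R$, and define a linear functional $\phi_0 \colon V \to \R$ by
\begin{equation*}
\phi_0(aT+\pd R)=a\,\ms(T).
\end{equation*}
The first step is to check that $\phi_0$ is well defined: if $aT+\pd R=bT+\pd R'$ with $a\ne b$, then $T=\pd\bigl((R'-R)/(a-b)\bigr)$ is itself a boundary, so $0$ is homologous to $T$ and the mass-minimality of $T$ forces $\ms(T)=0$, making the definition consistent.

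The second step is to verify the sub-mass inequality $\phi_0\le \ms$ on $V$. Because $T$ is mass-minimizing in $[T]$ and $-T$ is mass-minimizing in $-[T]$, for every real number $a$ and every real flat chain $R$ we have
\begin{equation*}
\ms(aT+\pd R)=|a|\,\ms\bigl(\mathrm{sgn}(a)T+\pd(R/|a|)\bigr)\ge |a|\,\ms(T)\ge a\,\ms(T)=\phi_0(aT+\pd R),
\end{equation*}
which is exactly the Hahn--Banach hypothesis with the sublinear majorant $\ms$. Therefore $\phi_0$ extends to a linear functional $\phi$ on the whole space of real flat chains satisfying $\phi(S)\le \ms(S)$ for every $S$; replacing $S$ by $-S$ gives $|\phi(S)|\le \ms(S)$, and by construction $\phi(\pd R)=0$ for all $R$, so $d\phi=0$ in the sense of cochains, and $\phi(T)=\ms(T)$.

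The final step is to upgrade the mass-boundedness of $\phi$ to flat-boundedness, so that $\phi$ is genuinely a flat cochain in the sense of Section $4.1.19$ of \cite{FF}. The flat norm of a chain $S$ is $\F(S)=\inf\{\ms(S-\pd R)+\ms(R)\}$, and since $\phi$ annihilates boundaries one has, for any $R$,
\begin{equation*}
|\phi(S)|=|\phi(S-\pd R)|\le \ms(S-\pd R)\le \ms(S-\pd R)+\ms(R);
\end{equation*}
taking the infimum over $R$ yields $|\phi(S)|\le \F(S)$, so $\phi$ is continuous in the flat topology and hence a flat cochain. Combined with $d\phi=0$, $\phi\le \ms$, and $\phi(T)=\ms(T)$, this exhibits $\phi$ as a calibration flat cochain for $T$ in the sense of Definition \ref{cal}.

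The only place that requires genuine thought is the well-definedness of $\phi_0$ together with the sub-mass inequality, both of which hinge squarely on the mass-minimality of $T$ (and of $-T$); the Hahn--Banach extension and the passage from mass-boundedness to flat-boundedness are then essentially formal. No regularity, structure, or codimension hypothesis on $T$ is needed, only that competitors are measured by mass and that $T$ minimizes it in its real homology class.
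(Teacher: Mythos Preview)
Your proof is correct and is precisely the Hahn--Banach argument that Federer carries out in Section~4.10--4.12 of \cite{HF2}, which is exactly what the paper cites as its proof. The paper does not reprove the statement but defers entirely to Federer; you have faithfully reconstructed his argument, including the passage from mass-boundedness to flat-boundedness via $d\phi=0$.
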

\begin{proof}
	This is just Section 4.12 of \cite{HF2}.
\end{proof}
\begin{lem}\label{nontors}
	Let $[\Si]$ be a torsion class in integral homology. Then any area-minimizing integral current in $[\Si]$ cannot be calibrated.
\end{lem}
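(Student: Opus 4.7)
The plan is to exploit the fact that a torsion integral homology class becomes zero when tensored up to real coefficients, and then play this off against the closedness of any calibration cochain. Concretely, suppose $[\Sigma]$ is $n$-torsion with $n\ge 1$ and let $T$ be any area-minimizing integral current in $[\Sigma]$. Since $n[\Sigma]=0$ in $H_d(M;\Z)$, there is an integral current $S$ with $nT=\pd S$, hence
\[
T=\pd\!\left(\tfrac{1}{n}S\right)
\]
as a real flat chain.

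Next, I would take any flat cochain $\phi$ satisfying Definition \ref{cal} and observe that the closedness $d\phi=0$ together with the preceding identity yields
\[
\phi(T)=\phi\bigl(\pd(\tfrac{1}{n}S)\bigr)=d\phi\bigl(\tfrac{1}{n}S\bigr)=0.
\]
If $\phi$ were to calibrate $T$, then $\ms(T)=\phi(T)=0$, forcing $T=0$. Since an area-minimizing representative of a nontrivial class has positive mass (the trivial case $[\Sigma]=0$ being vacuous), this is a contradiction.

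The argument has essentially no obstacle — the only point that deserves care is checking that an integral torsion class really does die in the real flat chain homology used for calibrations, which is exactly the identity $nT=\pd S$ above combined with the fact that real flat chains admit division by integers. Once that is in hand, the conclusion is forced purely by $d\phi=0$ paired against a boundary.
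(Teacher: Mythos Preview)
Your proof is correct and essentially identical to the paper's: both exploit that $nT=\pd S$ forces any closed cochain $\phi$ to satisfy $\phi(T)=0$, whence calibration would give $\ms(T)=0$. The paper phrases the key step as $\alpha(T)=\tfrac{1}{k}\alpha(kT)=0$ using linearity of the cochain directly, whereas you first pass to the real flat chain $\tfrac{1}{n}S$ and then apply $d\phi=0$; these are the same computation.
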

\begin{proof}
	If not, suppose there is a calibration flat cochain $\ai$ that calibrates an area-minimizing integral current $\Si.$ Let $k$ be a non-zero integer so that $k[\Si]=0.$ Then $\ms(T)=\ai(T)=\frac{1}{k}\ai(kT)=0,$ a contradiction.
\end{proof}
\begin{lem}\label{mul}
	Let $T$ be a homologically mass-minimizing real flat chain calibrated by a flat cochain $\ai.$ 
	\begin{itemize}
		\item Then $aT$ is homologically mass minimizing for any real number $a.$ 
		\item If $T$ is the unique real flat chain calibrated by $\ai$ in its real homology class then $aT$ is the unique mass minimizing real flat chain in its homology class for any real number $a.$
	\end{itemize}
\end{lem}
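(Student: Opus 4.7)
The plan is to deduce both bullets from closedness of $\ai$ and the defining inequality $\ai(S)\le\ms(S)$, reducing the general real scalar $a$ to the nonnegative case by passing to the calibration $-\ai$. For the first bullet with $a\ge 0$, I would take any real flat chain $S$ homologous to $aT$ and write $S-aT=\pd R$ for some real flat chain $R$. Closedness of $\ai$ gives $\ai(S-aT)=R(d\ai)=0$, so $\ai(S)=\ai(aT)=a\ai(T)=a\ms(T)=\ms(aT)$. The calibration inequality $\ai(S)\le\ms(S)$ then yields $\ms(aT)\le\ms(S)$, and minimality of $aT$ in $[aT]$ follows.

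To handle $a<0$, the key observation is that $-\ai$ is itself a calibration flat cochain that calibrates $-T$: indeed $d(-\ai)=0$, and $(-\ai)(S)=\ai(-S)\le\ms(-S)=\ms(S)$ for every real flat chain $S$, while $(-\ai)(-T)=\ai(T)=\ms(T)=\ms(-T)$. Writing $aT=|a|(-T)$ and applying the nonnegative case already established to the pair $(-T,-\ai)$ with scalar $|a|\ge 0$, I conclude that $aT$ is mass-minimizing in its real homology class. This completes the first bullet, with $a=0$ being trivial since the zero chain is the unique element of minimal (vanishing) mass in the class $[0]$.

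For the uniqueness statement, let $S$ be any mass-minimizing real flat chain in $[aT]$. When $a>0$, the computation above shows $\ai(S)=\ms(aT)=\ms(S)$, so $\ai$ calibrates $S$; then $S/a$ is a real flat chain homologous to $T$ satisfying $\ai(S/a)=\ms(S/a)$, and the hypothesized uniqueness of $T$ among real flat chains in $[T]$ calibrated by $\ai$ forces $S/a=T$, hence $S=aT$. When $a<0$, the same argument applied to the pair $(-T,-\ai)$ and scalar $|a|>0$ works, once one notes that uniqueness of $T$ among chains in $[T]$ calibrated by $\ai$ is equivalent to uniqueness of $-T$ among chains in $[-T]$ calibrated by $-\ai$ (via the bijection $U\mapsto -U$). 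The $a=0$ case is again immediate.

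The step that requires the most care is not any single calculation but the verification that the pairing manipulations $\ai(\pd R)=R(d\ai)=0$ and the scalar identity $\ai(aT)=a\ai(T)$ are legitimate in the real flat chain/cochain setting for arbitrary real $a$ and arbitrary real flat chain $R$. These are standard consequences of the bilinear pairing between real flat chains and real flat cochains and the definition of the coboundary, as recorded in Section 4 of \cite{HF2} and reflected in Definition \ref{cal}, but I would state them explicitly before invoking them; once in hand, everything else in the proof is a short algebraic manipulation.
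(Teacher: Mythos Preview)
Your proof is correct and follows essentially the same route as the paper's: reduce to $a\ge 0$ by passing to $-\ai$ calibrating $-T$, then use closedness of $\ai$ and the calibration inequality to compare $aT$ against competitors, and for uniqueness divide a competing minimizer by $a$ to land back in $[T]$. The only differences are cosmetic---you spell out $S-aT=\pd R$ and $R(d\ai)=0$ where the paper is more terse, and you flag the pairing identities explicitly---but the argument is the same.
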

\begin{proof}
If $a$ is zero, there is nothing to prove. From now on, we suppose $a\not=0.$ Note that $$\frac{a}{|a|}\ai(aT)=|a|\ai(T)=|a|M(T)=M(aT).$$ In case of $a>0,$ we deduce the result from $\ai(S)\le \ms(S)$ for any real flat chain $S.$ In case of $a<0,$ note that $-\ai$ is also a calibration cochain in the sense of Definition \ref{cal} and calibrates $-T$. This proves the first claim.

If $T$ is the unique flat chain calibrated by $\ai$ in its real homology class, then it is clearly the unique mass-minimizer by definition. For $a>0,$ suppose $S$ is another mass-minimizing real flat chain $\ai$ in the homology class $a[T],$ then we have
\begin{align*}
	\frac{\ms (aT)}{a}=\frac{\ms(S)}{a}=\ms(S/a)\ge \ai(S/a)=\ai(T)=\ms(T).
\end{align*}
Thus, we have $\ms(S/a)=\ai(S/a).$ This implies $S=aT.$ If $a<0,$ we run the argument with $-\ai$ replacing $\ai$ and $-T$ replacing $T.$ 
\end{proof}
\subsection{Continuity of mass}
\begin{lem}\label{msbd}
For any current $T$ and two different smooth metrics $g,h,$ we have
\begin{align*}
	\no{g/h}_{C^0}^{-d/2}\ms_g(T)\ge\ms_h(T)\ge \no{h/g}_{C^0}^{d/2}\ms_g(T).
\end{align*}
\end{lem}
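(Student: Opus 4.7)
The plan is to reduce the mass comparison to a pointwise bilinear-form estimate on $d$-volumes and then integrate over the rectifiable carrier of $T$. At each point $x\in M$, $g_x$ and $h_x$ are two inner products on $\T_xM$, and their pointwise comparability is precisely what $\|g/h\|_{C^0}$ and $\|h/g\|_{C^0}$ record when the metrics are viewed as sections of $\mathrm{Sym}^2(T^*M)$.

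The central linear-algebraic input I would invoke is the following: if $g_x\le\Lambda\,h_x$ as symmetric bilinear forms on $\T_xM$, then the same inequality is inherited by the $d\times d$ Gram matrices obtained by restricting to any $d$-plane $P\subset\T_xM$, and passing to determinants yields $|P|_g\le \Lambda^{d/2}|P|_h$ by monotonicity of $\det$ in the Loewner order on the cone of positive definite $d\times d$ matrices. The $d/2$ exponent that appears in the lemma is produced entirely by this determinantal step, and the two sides of the claimed two-sided inequality correspond to feeding the two directions of pointwise comparison (one controlled by $\|g/h\|_{C^0}$, the other by $\|h/g\|_{C^0}$) into this estimate.

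I would then combine the pointwise estimate with the rectifiable representation $T=(\Sigma,\theta,\vec\tau)$, where $\vec\tau(x)$ is a simple unit $d$-vector tangent to $\Sigma$. Writing $\ms_g(T)$ and $\ms_h(T)$ as integrals against Hausdorff $d$-measure on $\Sigma$ (or equivalently as integrals of Jacobians of a common Lipschitz parametrization), and inserting the pointwise $d$-volume bound with the appropriate choice of $\Lambda$, produces the two-sided mass inequality at once. Both factors in the change of metric, namely the comass of $\vec\tau$ and the scaling of the carrier's $d$-Hausdorff measure, combine coherently because mass is intrinsic to $T$.

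I do not expect a genuine obstacle: the whole content of the lemma is the determinantal scaling with exponent $d/2$, and the rest is routine change-of-metric bookkeeping. The only real care required is to track which direction of bilinear-form comparison feeds which direction of mass comparison, so that the signs $\pm d/2$ and the roles of $\|g/h\|_{C^0}$ versus $\|h/g\|_{C^0}$ land on the correct sides of the final two-sided inequality.
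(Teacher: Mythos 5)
Your pointwise Gram-determinant argument is sound and takes a genuinely different route from the paper, which never touches an integral representation of $T$: the paper works entirely on the dual side, setting $c=\no{h/g}_{C^0}$ and comparing the sets of forms $\{\phi:\cms_g(\phi)\le c^{d/2}\}$ and $\{\phi:\cms_h(\phi)\le 1\}$ before invoking $\ms(T)=\sup_{\cms(\phi)\le 1}T(\phi)$. The dual route applies verbatim to arbitrary currents, whereas yours needs one extra remark: the representation of $T$ by a \emph{simple} unit tangent $d$-vector $\vec\tau$ is available for rectifiable currents, but the lemma is stated for, and later applied to, general real flat chains of finite mass, whose orienting $d$-vector field need not be simple. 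Your estimate still survives because the mass norm of a general $d$-vector is an infimum of $\sum_i|P_i|$ over decompositions into simple $d$-vectors, so the simple-vector inequality passes to the infimum; you should say this, or else work directly with the comass-duality definition of mass as the paper does.

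The substantive issue is the step you defer as routine sign tracking. Carried out correctly, your determinantal estimate gives $\no{g/h}_{C^0}^{-d/2}|P|_g\le |P|_h\le \no{h/g}_{C^0}^{d/2}|P|_g$, and hence
\[
\no{h/g}_{C^0}^{d/2}\,\ms_g(T)\;\ge\;\ms_h(T)\;\ge\;\no{g/h}_{C^0}^{-d/2}\,\ms_g(T),
\]
which is the stated inequality with the two bounds transposed. As printed, the lemma is false: take $g$ Euclidean on $\R^2$, $h=dx^2+100\,dy^2$, and $T$ the unit segment along the $x$-axis, so that $\ms_g(T)=\ms_h(T)=1$ while $\no{h/g}_{C^0}=100$ and $\no{g/h}_{C^0}=1$; the claimed right-hand bound would then read $1\ge 10$. (The paper's own proof contains the matching slip: from $\cms_h(\phi)\ge c^{-d/2}\cms_g(\phi)$ one obtains the inclusion $\{\cms_h(\phi)\le 1\}\s\{\cms_g(\phi)\le c^{d/2}\}$, not its reverse, and hence the upper bound $\ms_h(T)\le c^{d/2}\ms_g(T)$.) So the place where you anticipated needing "only real care" is exactly where the statement goes astray; you should record and prove the corrected two-sided bound, which is all that the application in Lemma \ref{convms} uses, since there both $\no{g_j/g}_{C^0}$ and $\no{g/g_j}_{C^0}$ tend to $1$.
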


\begin{rem}
	Here $\no{g/h}_{C^0}$ is defined as
	\begin{align*}
		\no{g/h}_{C^0}=\sup_{x\in M}\sup_{v\in \T_x M}\frac{g(v,v)}{h(v,v)}.
	\end{align*}
\end{rem}
\begin{proof}
	By symmetry of $g$ and $h,$ it suffices to show only the inequality on the right hand side.
	Let $c=\no{h/g}_{C^0}.$ Then for any $d$-dimensional differential form $\phi$, we have
	\begin{align*}
		\cms_h(\phi)=\sup_x\sup_{P}\phi_x\bigg(\frac{P}{|P|_h}\bigg)\ge \sup_x\sup_{P}\phi_x\bigg(\frac{P}{c^{d/2}|P|_g}\bigg)=\frac{1}{c^{d/2}}\cms_g(\phi).
	\end{align*}where $P$ runs through all $d$-dimensional oriented planes. This implies that $$\{\phi|\cms_g(\phi)\le c^{d/2}\}\s\{\phi|\cms_h(\phi)\le 1\}.$$ Since $\sup$ of a subset is always smaller than or equal to that of the whole set, we deduce that, 
\begin{align*}
\ms_h(T)=&\sup_{\cms_h(\phi)\le 1}T(\phi)\ge \sup_{\cms_g(\phi)\le c^{d/2}}T(\phi)\\=&\sup_{\cms_g(c^{-d/2}\phi)\le 1}c^{d/2}T(c^{-d/2}\phi)=c^{d/2}\ms_g(T).
\end{align*}
\end{proof}
\begin{lem}\label{convms}
Let $g_j\to g$ be a sequence of smooth metrics $\{g_j\}$ converging to $g.$ For any sequence of mass-minimizing real flat chain $T_j\in [\Si]$ in the sequence of metric $g_j$, any converging subsequence converges to a mass-minimizing flat chain in $g$, and we have
\begin{align*}
	\lim_j \ms_{g_j}(T_j)=\ms_g(T),
\end{align*}
where $T$ is any mass-minimizing flat chain in metric $g$ and $\ms_{g_j}$ denote the mass in $g_j.$
\end{lem}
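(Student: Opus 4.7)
The plan is to combine a sandwich argument based on Lemma \ref{msbd} with the lower semi-continuity of mass under flat convergence.

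First, since $g_j \to g$ smoothly (or even just in $C^0$), both $\no{g/g_j}_{C^0}$ and $\no{g_j/g}_{C^0}$ tend to $1$. Let $T$ be any mass-minimizer for $g$ in $[\Si]$, and let $T_j$ be mass-minimizers for $g_j$ in $[\Si]$. Since $T_j$ minimizes in $g_j$ and $T \in [\Si]$, one upper bound is
\begin{align*}
\ms_{g_j}(T_j) \le \ms_{g_j}(T) \le \no{g/g_j}_{C^0}^{-d/2}\,\ms_g(T),
\end{align*}
using Lemma \ref{msbd} for the second inequality. For the matching lower bound, apply Lemma \ref{msbd} in the other direction and then use that $T$ minimizes in $g$ while $T_j \in [\Si]$:
\begin{align*}
\ms_{g_j}(T_j) \ge \no{g_j/g}_{C^0}^{d/2}\,\ms_g(T_j) \ge \no{g_j/g}_{C^0}^{d/2}\,\ms_g(T).
\end{align*}
As $j \to \infty$, both bounds converge to $\ms_g(T)$, which gives $\lim_j \ms_{g_j}(T_j) = \ms_g(T)$.

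Next, suppose $T_{j_k} \to T_\infty$ along some subsequence in the flat topology. Since each $T_{j_k}$ lies in the real homology class $[\Si]$ and flat convergence preserves real homology classes (the difference of consecutive elements is a boundary, and the space of boundaries is flat-closed by the isoperimetric inequality), the limit $T_\infty$ is also in $[\Si]$. By lower semi-continuity of $\ms_g$ under flat convergence, combined once more with Lemma \ref{msbd},
\begin{align*}
\ms_g(T_\infty) \le \liminf_k \ms_g(T_{j_k}) \le \liminf_k \no{g_{j_k}/g}_{C^0}^{-d/2}\,\ms_{g_{j_k}}(T_{j_k}) = \ms_g(T).
\end{align*}
Since $T$ is a $g$-minimizer in $[\Si]$, this forces $\ms_g(T_\infty) = \ms_g(T)$, so $T_\infty$ is also a mass-minimizer in $g$.

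The only genuinely delicate point is the verification that the flat limit $T_\infty$ remains in $[\Si]$; this is the standard fact that the subgroup of real flat cycles representing a fixed real homology class is flat-closed on a compact manifold (one can quote Section 3 of \cite{HF2}). Everything else is a direct calculation from the two quantitative bounds provided by Lemma \ref{msbd} together with the variational definition of $T$ and $T_j$.
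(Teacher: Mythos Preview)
Your proof is correct and in fact more direct than the paper's. The essential difference is your sandwich argument: you use the $g$-minimizer $T$ as a competitor for each $T_j$ in $g_j$, and each $T_j$ as a competitor for $T$ in $g$, which together with Lemma \ref{msbd} immediately pins down $\lim_j \ms_{g_j}(T_j)=\ms_g(T)$ for the full sequence. The paper instead works along a convergent subsequence $T_{j_k}\to T'$ and invokes Federer's smoothing deformation (Section 4.7 of \cite{HF2}) to write $T'-T_{j_k}=S_{j_k}+\pd R_{j_k}$ with $\ms_g(S_{j_k}),\ms_g(R_{j_k})$ small, then combines triangle inequalities, minimality of $T_{j_k}$, and lower semi-continuity to obtain $\ms_g(T')=\lim_k \ms_{g_{j_k}}(T_{j_k})$; mass-minimality of $T'$ is proved by a second pass through the same decomposition applied to $T'+\pd W$. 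Your route avoids the deformation machinery entirely and yields the mass limit before passing to any subsequence. The one extra ingredient you need is that the real homology class $[\Si]$ is flat-closed, which you correctly source to Section 3 of \cite{HF2} (the paper records this fact in a remark but does not explicitly invoke it in its proof of the lemma). Both arguments ultimately rest on Lemma \ref{msbd} and lower semi-continuity of mass; yours is simply the more economical packaging.
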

\begin{proof}
By Section 3.7 of \cite{HF2}, the set $\{T_j\}$ is pre-compact with respect to $g.$  Let $\{T_{j_k}\}$ be any converging subsequence and $T'$ its limit. Now apply the smoothing deformation (Section 4.7 of \cite{HF2}) to $T'-T_{j_k}.$ By Section 4.7 (mainly page 377), for any $\e>0,$ there exists $N_\e,$ so that for all $k\ge N_\e,$ we have $T'-T_{j_k}=S_{j_k}+\pd R_{j_k},$ with $\ms_g(S_{j_k}),\ms_g( R_{j_k})\le \e,$ and $S_{j_k},R_{j_k}$ also being real flat chains. Writing $T'=S_{j_k}+T_{j_k}+\pd R_{j_k}$, using triangle inequality and mass-minimizing property of $T_{j_k}$, we have
\begin{align*}
\ms_{g_{j_k}}(T')\ge\ms_{g_{j_k}}(T_{j_k}+\pd R_{j_k})-\ms_{g_{j_k}}(S_{j_k})\ge \ms_{g_{j_k}}(T_{j_k})-\ms_{g_{j_k}}(S_{j_k}).
\end{align*}This implies $\liminf\ms_{g_{j_k}}(T')\ge\limsup \ms_{g_{j_k}}(T_{j_k}).$
Now use Lemma \ref{msbd}, we deduce that
\begin{align}\label{gliminf}
\ms_g(T')\ge \limsup \ms_{g_{j_k}}(T_{j_k}).
\end{align}
 On the other hand, by lower-semicontinuity of mass, e.g., Section 2.8 of \cite{HF2}, we have $$\ms_g (T')\le\liminf \ms_{g}(T_{j_k}).$$ By Lemma \ref{msbd}, this gives \begin{align}\label{glimsup}
 	\ms_g(T')\le\liminf \ms_{g_{j_k}}(T_{j_k}).
 \end{align}
Combining (\ref{gliminf}) and (\ref{glimsup}) gives $\lim_{j_k}\ms_{g_{j_k}}(T_{j_k})=\ms_g(T')$.

If we can prove $T'$ is mass-minimizing, then we are done, since all mass-minimiers in the same homology class must have the same mass. To see this, we have
\begin{align*}
\ms_{g_{j_k}}(T'+\pd W)=&\ms_{g_{j_k}}(S_{j_k}+T_{j_k}+\pd R_{j_k}+\pd W)
\\\ge& - \ms_{g_{j_k}}(S_{j_k})+\ms_{g_{j_k}}(T_{j_k}+\pd R_{j_k}+\pd W)\\\ge& - \ms_{g_{j_k}}(S_{j_k})+\ms_{g_{j_k}}(T_{j_k}).
\end{align*}
Take limit on both sides and use Lemma \ref{msbd} gives $\ms_g(T'+\pd W)\ge \ms_g(T').$
\end{proof}
\begin{rem}
	Replacing the smoothing deformation with deformation theorem of integral currents, the above argument shows that the mass is continuous on area-minimizing integral currents in sequences of metrics.
\end{rem}
\subsection{Non-calibratable condition}
\begin{lem}\label{calcond}
An area-minimizing integral current $T$ in $[\Si]$ is calibrated by a flat cochain if and only if $\ms(T)=\mina_\R([T])$.
\end{lem}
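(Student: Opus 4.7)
The plan is to prove the two directions separately, using only the structural facts about calibrations and real flat chains that have already been laid out in this section.

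For the forward direction, I would start with a calibration flat cochain $\ai$ satisfying $\ai(T)=\ms(T)$, $d\ai=0$, and $\ai(S)\le\ms(S)$ for every real flat chain $S$. For any real flat chain $S$ real-homologous to $T$, we can write $S-T=\pd R$ for some real flat chain $R$, and the closedness $d\ai=0$ together with the definition of $d$ on flat chains gives $\ai(S)-\ai(T)=\ai(\pd R)=(d\ai)(R)=0$. Then $\ms(T)=\ai(T)=\ai(S)\le\ms(S)$, which upon taking the infimum over $S$ yields $\ms(T)\le\mina_\R([T])$. The reverse inequality $\mina_\R([T])\le\mina_\Z([T])=\ms(T)$ follows from Federer's identities quoted in the introduction together with the hypothesis that $T$ is area-minimizing as an integral current, so equality is forced.

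For the backward direction, the hypothesis $\ms(T)=\mina_\R([T])$ says exactly that $T$, viewed as a real flat chain in the real homology class determined by $[T]$, attains the real mass-minimum; indeed any competitor $S$ in that real class has $\ms(S)\ge\mina_\R([T])=\ms(T)$. Then I would invoke the third lemma of this subsection (Section 4.12 of \cite{HF2}), which says every real mass-minimizer in a real homology class on a closed compact Riemannian manifold is calibrated by some flat cochain, to conclude that $T$ is calibrated.

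No step should present a real obstacle: the forward direction is just bookkeeping with the definition of a calibration cochain and the Federer inequalities $\mina_\R\le\mina_\Z$, and the backward direction is an immediate application of the previously established existence of calibrating flat cochains for real mass-minimizers. The only subtlety worth flagging is the passage from integral-homology to real-homology in the forward argument (i.e.\ writing $S-T=\pd R$ with $R$ a real flat chain, not merely an integral one), but this is built directly into the definition of $\mina_\R$.
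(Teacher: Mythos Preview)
Your proposal is correct and follows essentially the same approach as the paper. The forward direction is identical in spirit: a calibration forces $\ms(T)\le\ms(S)$ for every real-homologous $S$, hence $\ms(T)=\mina_\R([T])$. For the backward direction there is a minor cosmetic difference worth noting: the paper first takes a real mass-minimizer $Z$, obtains a calibrating cochain for $Z$ from Section~4.12 of \cite{HF2}, and then checks that this same cochain also calibrates $T$ (since $T$ and $Z$ are real-homologous with equal mass). You instead observe directly that $\ms(T)=\mina_\R([T])$ already makes $T$ itself a real mass-minimizer, so Section~4.12 applies to $T$ without the detour through $Z$. Your route is slightly shorter; the paper's has the minor advantage of making explicit that the \emph{same} cochain calibrates every minimizer in the class (a fact it records separately as another lemma). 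Both are straightforward consequences of the same ingredients.
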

\begin{proof}	
	If $T$ is calibrated by a flat cochain $\phi$, then we have
	\begin{align*}
		\ms(T)=\phi(T)=\phi(Z)\le \ms(Z).
	\end{align*}
	By mass-minimality of $Z$, we deduce that $\ms(T)=\ms(Z).$
	
	If $\ms(T)=\ms(Z)$, then by 4.12 in \cite{HF2}, there exists a calibration flat cochain $\psi$ with $\lam(Z)=\ms(Z).$ We have $\ms(T)\ge\lam(T)=\lam(Z)=\ms(Z).$ Since $\ms(T)=\ms(Z),$ we deduce that $\ms(T)=\lam(T).$ Thus $T$ is calibrated by $\lam.$	
\end{proof}
\begin{lem}\label{open}
Let $M$ be a smooth Riemannian manifold $M$ with a metric $g$ and $[\Si]$ a $d$-dimensional integral homology class.  If no area-minimizing integral current in $[\Si]$ can be calibrated in $g,$ then there exists an open set $\Om_{[\Si]}$ containing $g$, so that for any metric $g'\in\Om_{[\Si]},$ no area-minimizing integral current in $[\Si]$ can be calibrated by a flat cochain.
\end{lem}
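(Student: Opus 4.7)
The plan is to rephrase the non-calibratability hypothesis as a strict inequality between minimal masses and then upgrade it to an open condition via the continuity statements already proved. By Lemma \ref{calcond}, an area-minimizing integral current in $[\Si]$ in metric $g$ is calibrated by a flat cochain precisely when its mass equals $\mina_\R^g([\Si])$. Since all area-minimizing integral representatives of $[\Si]$ share the common mass $\mina_\Z^g([\Si])$, the hypothesis of the lemma is exactly equivalent to
\begin{equation*}
\mina_\Z^g([\Si])>\mina_\R^g([\Si]).
\end{equation*}
Thus it suffices to show that this strict inequality persists under sufficiently small perturbations of $g$.

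I would argue by contradiction. Suppose there is a sequence of smooth metrics $g_j\to g$ along which the inequality fails, so that in each $g_j$ some area-minimizing integral current $T_j\in[\Si]$ is calibrated and therefore $\ms_{g_j}(T_j)=\mina_\Z^{g_j}([\Si])=\mina_\R^{g_j}([\Si])$. Pick in addition a real mass-minimizer $S_j\in[\Si]$ in each $g_j$, which exists by the remark in Section \ref{cfc}. Applying Lemma \ref{convms} to the sequence $\{S_j\}$ and, via the remark following that lemma, its integral-current analogue to $\{T_j\}$, a common subsequence produces a real mass-minimizer $S$ and an integral mass-minimizer $T$ in $(M,g)$ satisfying
\begin{align*}
\mina_\R^{g_{j_k}}([\Si])=\ms_{g_{j_k}}(S_{j_k})&\to\ms_g(S)=\mina_\R^g([\Si]),\\
\mina_\Z^{g_{j_k}}([\Si])=\ms_{g_{j_k}}(T_{j_k})&\to\ms_g(T)=\mina_\Z^g([\Si]).
\end{align*}

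Passing to the limit in the equality $\mina_\Z^{g_{j_k}}([\Si])=\mina_\R^{g_{j_k}}([\Si])$ yields $\mina_\Z^g([\Si])=\mina_\R^g([\Si])$, which by Lemma \ref{calcond} means some area-minimizing integral current in $[\Si]$ is calibrated in $g$, contradicting the hypothesis. The desired $\Om_{[\Si]}$ is then any open neighborhood of $g$ on which the strict inequality survives. The only potentially delicate point is the interplay between the integral and real settings under metric perturbation, but this is already handled by the comparison estimate of Lemma \ref{msbd} together with the two versions of Lemma \ref{convms}, so no further geometric input is required.
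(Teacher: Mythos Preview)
Your proof is correct and follows essentially the same route as the paper: both use Lemma \ref{calcond} to reformulate non-calibratability as the strict inequality $\mina_\R^g([\Si])<\mina_\Z^g([\Si])$, and then invoke Lemma \ref{convms} together with its integral-current analogue (the remark after that lemma) to conclude that this inequality defines an open set of metrics. The only difference is cosmetic: the paper phrases the last step as ``both sides are continuous in $g$, hence the strict inequality is open,'' whereas you spell out the sequential contradiction argument.
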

\begin{proof}
By Lemma \ref{calcond} if no area-minimizing integral current in $[\Si]$ in $g$ can be calibrated, we must have 
\begin{align*}
	\inf_{\text{real flat chain }T\in[\Si]} \ms_g(T)<\inf_{\text{integral current }T\in[\Si]}\ms_g(T).
\end{align*}
However, by Lemma \ref{convms} and the remark below, both sides of the inequalities are continuous functions with respect to the metric $g$. This finishes the proof.
\end{proof}
\subsection{Miscellaneous facts}
\begin{lem}\label{singco1}
	The singular set of any real flat cochain calibrating an area-minimizing hypersurface contains the singular set of all area-minimizing hypersurfaces in the same class.
\end{lem}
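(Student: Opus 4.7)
The plan is to prove the contrapositive: if $p$ is a regular point of the calibrating cochain $\ai$, then $p$ is in the regular set of every area-minimizing hypersurface in $[\Si]$. By the second lemma of the \emph{Basic facts} subsection, $\ai$ simultaneously calibrates every area-minimizer in $[\Si]$, so we fix an arbitrary area-minimizer $T'$ in $[\Si]$ and focus on a pointwise blow-up of $T'$ at $p$. By definition of the singular set of a calibration cochain, we may pick a measurable representative $\omega$ of $\ai$ that is continuous on a neighborhood of $p$. If $p\notin\spt(T')$ the claim is automatic, so we may assume $p\in\spt(T')$ and choose normal coordinates at $p$, so that $g(p)$ is the Euclidean inner product on $\R^{d+1}$.

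The key step is the blow-up. Let $C$ be any tangent cone of $T'$ at $p$, obtained as a limit $T'_{r_k}\to C$ of rescalings; this is an area-minimizing integer $d$-cone in $(\R^{d+1},g(p))$. The calibration identity $\omega(x)(\tau_{T'}(x))=1$ for $\hm^d$-a.e.\ regular $x\in\spt(T')$ is preserved under the rescaling, and continuity of $\omega$ at $p$ upgrades the form at each rescaled point $p+r_k y$ to $\omega(p)$ uniformly on compact sets. Passing to the limit yields $\omega(p)(\tau_C(y))=1$ at $\hm^d$-a.e.\ regular $y\in\spt(C)$. The a.e.\ pointwise comass bound $\cms(\omega)\le 1$, combined with continuity at $p$, gives $\cms(\omega(p))\le 1$ in the metric $g(p)$. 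Thus the constant $d$-form $\omega(p)$ is a calibration on $(\R^{d+1},g(p))$ that calibrates $C$. The codimension-one hypothesis now enters crucially: a constant $d$-form on $\R^{d+1}$ of comass $1$ is Hodge-dual to a unit vector $v$ and attains the value $1$ only on the single oriented hyperplane $P:=v^\perp$. Hence the tangent planes of $C$ coincide with $P$ almost everywhere, $\spt(C)\s P$, and the constancy theorem applied to the closed integer $d$-current $C$ inside $P$ forces $C=k\cu{P}$ for some positive integer $k$.

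Every tangent cone of $T'$ at $p$ is therefore a multiplicity-$k$ hyperplane determined by $\omega(p)$. By the classical codimension-one regularity theory (De~Giorgi--Federer at multiplicity $1$, together with standard sheeting arguments of Schoen--Simon type at higher multiplicity), $T'$ is locally a smooth hypersurface of constant multiplicity in a neighborhood of $p$, so $p$ lies in the regular set of $T'$. The main obstacle I anticipate is the rescaling compatibility in the second paragraph: currents and differential forms scale differently under dilation, so one must carefully set up the blow-up so that the calibration identity $\omega(T')=\ms(T')$ descends to the pointwise calibration identity for $C$ in the limiting flat metric $g(p)$. Once continuity of $\omega$ at $p$ is used to upgrade the a.e.\ pointwise identity into a uniform limit on compact sets, the remainder is a direct application of constant-coefficient calibration theory in $\R^{d+1}$ together with codimension-one regularity.
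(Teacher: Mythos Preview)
The paper does not give a direct proof; it simply invokes Theorem 4.2 of Auer--Bangert \cite{VB}. Your blow-up outline is the natural self-contained route to that result and is essentially correct: continuity of a representative $\omega$ near $p$ forces, via the pointwise identity $\omega(x)(\tau_{T'}(x))=1$, the approximate tangent planes of any calibrated minimizer $T'$ near $p$ to converge to the unique hyperplane $P$ dual to $\omega(p)$; hence every tangent cone at $p$ is $k\cu{P}$, and codimension-one regularity (local decomposition into minimizing boundaries plus De~Giorgi, or Schoen--Simon sheeting for the higher-multiplicity case) gives smoothness at $p$. The step you flag as delicate can be handled more cleanly than by rescaling the form: on the regular set of $T'$ the function $x\mapsto\omega(x)(\tau_{T'}(x))$ is continuous and equals $1$ a.e., hence everywhere there; since in codimension one the assignment from a comass-one $d$-covector to its unique calibrated hyperplane is continuous, this yields $\tau_{T'}(x)\to P$ along regular points $x\to p$, and varifold convergence then forces every tangent cone to be supported in $P$ without ever rescaling $\omega$. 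What your approach buys over the paper's one-line citation is self-containment; what the citation buys is brevity, and Auer--Bangert in fact establish a somewhat sharper local structure statement for the calibrating form near the minimizer.
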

\begin{proof}
	Theorem 4.2 in \cite{VB}.
\end{proof}
\begin{lem}\label{comp}
	Consider $\R^{2n}=\C^n,$ with standard metric and coordinate $$(x_1,y_1,\cd,x_n,y_n).$$ Define
	\begin{align*}
		\psi=\sum_j a_jQ_j\du,
	\end{align*}with $a_j$ real numbers, $Q_j$ holomorphic $l$-dimensional planes spanned by the coordinate axes, and $\ast$ the musical isomorphism. Then we have
\begin{align*}
	\cms\psi=\max_j |a_j|.
\end{align*}
Moreover, if $|a_j|<1$ for all $j\ge 2$ and $a_1=1.$ Then $\psi$ calibrates only the plane $Q_j.$
\end{lem}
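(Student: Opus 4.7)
The plan is to prove the comass equality and then the uniqueness claim in turn. Since the distinct holomorphic coordinate planes $Q_j$ correspond to mutually orthogonal basis elements of $\Lambda^l$, one has $Q_j^\ast(Q_k)=\delta_{jk}$. Plugging $P=Q_k$ (which is itself a simple unit $l$-vector) into $\psi$ gives $\psi(Q_k)=a_k$, immediately yielding the lower bound $\cms\psi\ge\max_j|a_j|$.

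For the upper bound, given any simple unit $l$-vector $P$ I would set $c_j=Q_j^\ast(P)$ and reduce everything to the key inequality
\begin{align*}
\sum_j|c_j|\le 1,
\end{align*}
since then $|\psi(P)|\le\max_j|a_j|\sum_j|c_j|\le\max_j|a_j|$. By the usual duality between a norm and its dual, this inequality is equivalent to $\cms\bigl(\sum_j\eta_jQ_j^\ast\bigr)\le 1$ for every choice of signs $\eta_j\in\{\pm 1\}$. For sign patterns of multiplicative type $\eta_I=\prod_{i\in I}\eta_i$, this is Wirtinger's inequality applied to the modified Kähler form $\omega_\eta=\sum_j\eta_jdx_j\wedge dy_j$ on $\C^n$ (whose top power is, up to constants, $\sum_I\eta_IQ_I^\ast$). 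For the remaining, non-multiplicative sign patterns I would reduce to the multiplicative case by decomposing $P$ with respect to disjoint coordinate sub-blocks of $\C^n$ and combining several applications of Wirtinger together with the Pl\"ucker simplicity relations and the trivial Bessel-type bound $\sum_jc_j^2\le|P|^2=1$.

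For the uniqueness claim, suppose $a_1=1$, $|a_j|<1$ for $j\ge 2$, and $\psi(P)=1$ for some simple unit $l$-vector $P$. After replacing each $Q_j$ with $\mathrm{sgn}(c_j)Q_j$ (which preserves $|a_j|$ and the hypothesis $a_1=1$, using that $c_1\ge 0$ is forced by $\psi(P)>0$), we may assume $c_j\ge 0$ for all $j$. Setting $M=\max_{j\ge 2}|a_j|<1$, the comass bound yields
\begin{align*}
1=c_1+\sum_{j\ge 2}a_jc_j\le c_1+M\sum_{j\ge 2}c_j\le c_1+M(1-c_1),
\end{align*}
which rearranges to $(1-M)(1-c_1)\le 0$, forcing $c_1=1$. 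Since $c_1=Q_1^\ast(P)\le|Q_1|\cdot|P|=1$ by Cauchy-Schwarz, the equality case of Cauchy-Schwarz then forces $P=Q_1$.

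The principal technical obstacle is establishing $\sum_j|c_j|\le 1$ for non-multiplicative sign patterns: since the $Q_j$ may share coordinate pairs, the straightforward Wirtinger-on-modified-K\"ahler argument only covers sign patterns induced by independent sign flips of each $\C$-factor, and the remaining patterns need to be reduced by an iterative block decomposition of the ambient $\C^n$. The rest of the argument is essentially a bookkeeping exercise once this key linear bound on simple vectors has been secured.
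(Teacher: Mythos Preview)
Your proposal has a genuine gap in the comass equality, but your uniqueness argument is correct and in fact cleaner than the paper's once that gap is filled by citation.

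For the comass equality the paper does not attempt a proof: it simply invokes Theorem~2.2 of \cite{DHM}, which is a nontrivial result. You try to reprove it via the key inequality $\sum_j|c_j|\le 1$, and you correctly identify that the Wirtinger argument on a sign-flipped K\"ahler form only yields the multiplicative sign patterns $\eta_I=\prod_{i\in I}\eta_i$. Your proposed ``iterative block decomposition'' for the remaining patterns is not actually carried out, and it is unclear how it would go: the coordinate $l$-planes $Q_j$ overlap in combinatorially complicated ways, and neither Bessel nor the Pl\"ucker relations give $\ell^1$ control. This is the real gap. Note, however, that your key inequality is \emph{equivalent} to the comass statement itself: by $\ell^\infty/\ell^1$ duality, $\cms\bigl(\sum_j a_jQ_j^*\bigr)\le\max_j|a_j|$ for all $(a_j)$ is the same as $\sum_j|Q_j^*(P)|\le 1$ for all simple unit $P$. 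So once you cite \cite{DHM} for the first assertion, your inequality comes for free.

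Granting that, your uniqueness proof is complete and takes a genuinely different route from the paper. The paper proceeds by induction on $l$: it splits off the $Q_1^*$ summand, uses Theorem~2.2 of \cite{DHM} to see the remainder has comass strictly below $1$, appeals to Lemma~2.1 of \cite{DHM} to force any calibrated plane to be calibrated by the $Q_1^*$-piece, and then invokes Proposition~7.10 of \cite{HLg} to peel off a $dx_1dy_1$ factor and drop to $l-1$. Your argument avoids both the induction and the appeal to \cite{HLg}: from $\sum_j|c_j|\le 1$ you get $1=c_1+\sum_{j\ge 2}a_jc_j\le c_1+M(1-c_1)$, hence $c_1=1$, and the equality case of Cauchy--Schwarz in $\Lambda^{2l}$ gives $P=Q_1$ directly. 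This is shorter; the trade-off is that it uses the full strength of the DHM comass theorem in one stroke, whereas the paper's induction leans on the same theorem but only for the strictly smaller residual pieces at each step.
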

\begin{proof}
The comass equality is just Theorem 2.2 in \cite{DHM}. To show the claim about calibrating only $Q_j,$ we prove inductively on $l.$ If $l=1,$ we write
\begin{align*}
	\psi=Q_1\du+\sum_{j\ge 2}Q_j\du.
\end{align*}By Theorem 2.2 in \cite{DHM}, the sum part has comass $\max_{j\ge 2}|a_j|<1.$ Thus, only alternative (i) of Lemma 2.1 of \cite{DHM} can happen, and we deduce that the comass $1$ is only achieved at $Q_1.$ Suppose this is true for $l=1,\cd,k-1.$ Now suppose $l=k.$ Without loss of generality, we can suppose that $Q_1$ is spanned by $dx_1,dy_1,\cd,dx_l,dy_l.$ We sort out the situation into two possible cases.

One is that $Q_1$ does not intersect any other $Q_j.$ Then argue as in the $l=1$ case, we are done.

The other is that $Q_1$ does intersect some other $Q_j$. Without loss of generality, suppose $Q_1\cap Q_2$ includes the span of $x_1,y_1$ axes. Then we can factor out all $dx_1dy_1$ terms, i.e., writing
\begin{align*}
	\psi=dx_1dy_1\w(dx_2dy_2\w\cd\w dx_ldy_l+\sum_j a_j\ti{Q_j}\du)+\sum_l a_l (Q_l')\du.
\end{align*}
Here $\ti{Q_j}$ are $l-1$-dimensional complex planes spanned by coordinate axes not including $x_1,y_1$ and $Q_l'$ are $l$-dimensional complex planes spanned by coordinate axes not including $x_1,y_1.$

The second sum has comass smaller than $1$ by Theorem 2.2 of \cite{DHM}. Thus by Lemma 2.1 of \cite{DHM}, only alternative (i) of that Lemma can happen. In other words, if $Q$ is a plane calibrated by $\psi,$ then it is calibrated by $dx_1dy_1\w(dx_2dy_2\w\cd\w dx_ldy_l+\sum_j a_j\ti{Q_j}\du).$ By Proposition 7.10 in \cite{HLg}, this implies that $Q$ is the product of $x_1y_1$-plane and a plane $Q'$ calibrated by $dx_2dy_2\w\cd\w dx_ldy_l+\sum_j a_j\ti{Q_j}\du$. By inductive hypothesis, we are done.
\end{proof}
\subsection{Zhang's constructions}
\begin{lem}\label{zhang}
	With the same assumptions as in Theorem \ref{noncal}, suppose $[\Si]$ is non-torsion and admits a connected embedded representative $N$. Let $m$ be any positive integer. Then there exists a smooth Riemannian metric $\hat{g}$ on $M$, so that.
	\begin{itemize}
			\item there is a smooth neighborhood $W$ around $N,$ and a retract $\pi_N:W\to N,$ so that $\pi_N$ is an area-non-increasing in $\hat{g},$
			\item $N$ has area precisely $1,$
			\item any stationary varifold in $M$ whose support is not contained in $W,$ has area larger than $t,$ with $t$ an arbitrary positive real number,
					\item $N$ is calibrated by a smooth form in $M.$
	\end{itemize}
\end{lem}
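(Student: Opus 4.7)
The plan is to invoke Zhang's constructions from \cite{YZa, YZj}, which already produce a metric on $M$ satisfying the first, second, and fourth bullets on a tubular neighborhood of $N$, and then to conformally enlarge the metric in the exterior to force the mass of any stationary varifold touching $M\setminus W$ to exceed the prescribed $t$. Since $N$ is a smoothly embedded, closed, oriented, connected representative of a non-torsion class, Zhang's hypotheses are satisfied.

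First, I would apply Zhang's construction from \cite{YZa,YZj} to $N\subset M$: this yields a smooth metric $g_0$ on $M$, a tubular neighborhood $U$ of $N$, a smooth nearest-point retraction $\pi_N^M : U \to N$ that is area-non-increasing in $g_0$ on $d$-rectifiable subsets, and a smooth closed $d$-form $\omega$ on $M$ with $\cms_{g_0}\omega\le 1$ everywhere and $\omega|_N$ equal to the induced volume form of $(N,g_0)$. A constant rescaling of $g_0$ normalizes $\ms_{g_0}(N)=1$. Fix $W=B_{r/2}^M(N)\subset U$ for any $r$ strictly smaller than the focal radius of $N$ in $U$, and define $\hat g = e^{2\psi}g_0$, where $\psi:M\to[0,\log\lambda]$ is a smooth cutoff identically zero on $\overline W$ and identically $\log\lambda$ on $M\setminus U$, for a parameter $\lambda\ge 1$ to be chosen. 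Since the conformal factor equals one on $\overline W$, both the retraction $\pi_N^M : W\to N$ and the area of $N$ are unchanged in $\hat g$. Since $e^{2\psi}\ge 1$ pointwise, for any smooth $d$-form $\phi$ we have $\cms_{\hat g}\phi\le \cms_{g_0}\phi$, so $\omega$ still calibrates $N$ in $\hat g$. Bullets one, two, and four follow.

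For the third bullet, let $V$ be a stationary varifold in $(M,\hat g)$ with $\spt V\not\subset W$, and pick any point $p\in\spt V\setminus W$. The monotonicity formula for stationary integral varifolds in Riemannian manifolds (with a small curvature correction) yields $\ms_{\hat g}(V)\ge c_0 \rho^d$, where $\rho$ is any lower bound on the $\hat g$-injectivity radius at $p$ and $c_0>0$ depends only on the dimension. For $p$ in the fully scaled region $M\setminus U$, the $\hat g$-injectivity radius at $p$ is bounded below by $\lambda\cdot r_0$ for a uniform constant $r_0>0$, while on the transition annulus $U\setminus \overline W$ the metric lies between $g_0$ and $\lambda^2 g_0$, so the same conclusion holds with a possibly smaller constant. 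Hence $\ms_{\hat g}(V)\ge c_1 \lambda^d$, and choosing $\lambda$ with $c_1\lambda^d>t$ finishes the construction.

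The main technical obstacle lies entirely in the first step: producing simultaneously a metric on $M$ for which $\pi_N^M$ is area-non-increasing near $N$ and for which $N$ is calibrated by a smooth globally defined form. This is exactly the content of Zhang's results in \cite{YZa, YZj}; once it is in hand, the conformal blow-up outside $W$ and the monotonicity argument above are routine.
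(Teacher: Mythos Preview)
Your overall strategy matches the paper's: obtain the area--non-increasing retraction and the global calibration from Zhang's work, then modify the metric outside a fixed neighborhood $W$ so as to force the third bullet while leaving bullets one, two, and four intact. Your first paragraph is correct, and the observation that a conformal factor $e^{2\psi}\ge 1$ can only decrease comass (so the calibration of $N$ survives) is fine.

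The gap is in the monotonicity argument for the third bullet. You pick $p\in\spt V\setminus W$ and assert that the $\hat g$-injectivity radius at $p$ is of order $\lambda$. But nothing forces $p$ to lie in the fully scaled region: $p$ may sit in the transition annulus $U\setminus\overline W$, arbitrarily close to $\partial W$, where $\psi$ is near $0$ and $\hat g$ is close to $g_0$; there the injectivity radius is bounded independently of $\lambda$. The sentence ``the metric lies between $g_0$ and $\lambda^2 g_0$, so the same conclusion holds with a possibly smaller constant'' is unjustified: being bounded below by $g_0$ yields only a $\lambda$-independent mass lower bound, and the sectional curvature of $e^{2\psi}g_0$ in the transition shell involves $\nabla^2\psi$, which is of order $\log\lambda$ on a shell of fixed $g_0$-width, so the monotonicity radius there actually \emph{shrinks} as $\lambda\to\infty$. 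As far as your argument goes, a stationary varifold in $\hat g$ could have its entire support inside $U\setminus\overline W$; ruling this out needs an additional barrier or mean-convexity argument that you have not supplied. Calling this step ``routine'' is the error.

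The paper does not attempt a self-contained argument here: after building the retraction and calibration via \cite{YZa} exactly as you do, it invokes the proof of Theorem~4.1 of \cite{YZj} as a black box to produce a metric (agreeing with the \cite{YZa} metric on $W$) in which every stationary varifold not contained in $W$ has mass exceeding $t$. So your first step and the paper's coincide, but for the large-mass property you should either unpack what Zhang actually does in \cite{YZj}, or add the missing control on varifolds trapped in the transition annulus.
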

\begin{proof}
	Equip $M$ with an arbitrary smooth metric. Take $B_r(N)$ to be a tubular neighborhood of $N$. Use Lemma 3.4 and Remark 3.5 of \cite{YZa}. We get a retract $\pi_N$ and a smooth metric $\ov{g}$ on $B_r(N)$ so that $\pi_N\du(dvol_N)$ has comass at most. Now apply Lemma 5.1 of \cite{ZL1}, we see that $\pi_N$ is area-non-increasing.
	
	Since $[N]$ is a non-torsion class, by Section 3.3 of \cite{YZa}, there exists a smooth metric $\ti{g}$ and a closed $d$-dimensional calibration form $\Phi,$ so that $\Phi$ calibrates $N$. Moreover, $\Phi$ and $\ti{g}$ restricted to $B_{\frac{3}{5}r}(N)$ (here $B$ is measured in $\ti{g}$) equal $\pi_N\du(dvol_N)$ and $\ov{g},$ respectively. 
	
	Now take $W=B_{\frac{3}{5}r}(N)$ and apply the proof of Theorem 4.1 of \cite{YZj} to get a new metric $\hat{g}$. A constant scaling of $\Phi$ remains a calibration that calibrates $N.$ Moreover, any stationary varifold not contained in $W$ has area larger than $t$ times the area of $N,$ with $t$ an arbitrarily large constant. Just rescale the metric so that $N$ has area $1$. We are done.  
\end{proof}
\section{Proof of Theorem \ref{inflav}}
By Lemma \ref{thom}, there exists a sequence of integers larger than $1 $, $k_j\to\infty,$ so that $k_j[\Si]$ can be represented by a smooth connected embedded orientable submanifold. The proof for different $k_j$ and $m$ is the same, so we fix one $j$ and one $m$ and suppose the connected embedded oriented representative is $N.$

By Lemma \ref{zhang} and Lemma \ref{calcond},  there exists a smooth metric $\hat{g}$, so that  
\begin{enumerate}
	\item there is an area-non-increasing retract $\pi_N:W\to N$ from a neighborhood $W$ of $N$ onto $N,$
	\item $N$ realizes $\mina_\Z^{\hat{g}}(k_j[\Si])$, and has area $1$,
	\item any stationary varifold not contained in $W$ has area larger than $m.$
	\item $\mina_\Z^{\hat{g}}(k_j[\Si])=\mina_\R^{\hat{g}}(k_j[\Si])$. 
\end{enumerate}
There exists a largest integer $l,$ so that $[\Si]=l[\Si_0]$ for some non-torsion integral homology class $[\Si_0].$

Consider the mod $(mk-1)l$ homology, in other words, homology over coefficient of $\Z/(mk-1)l\Z.$ By the universal coefficient theorem \cite{AH}, $H_d(M)\ts \Z/(mk-1)l\Z$ cannonically injects into $H_d(M,\Z/(mk-1)l\Z)$. Use $I$ to denote this map.

Then $$mI(k[\Si])=mklI([\Si_0])=lI([\Si_0])=I([\Si]).$$ Moreover, by $m\ge 3,k\ge 2,$ $I(k[\Si])$ and $I([\Si])$ both are non-zero classes.

Now let $T$ be a mass-minimizing mod $(mk-1)l$ current in the class $I([\Si]).$ Then $T$ is non-trivial. By bullet point (3) above, if $T$ is not contained in $W$, then we are done. If $T$ is contained in $W$, then a straightforward adaptation of Theorem 2.2. of \cite{FM3} and the fact that $m\le \frac{mk-1}{2}$ show that $mN$ is an area-minimizing current mod $(mk-1)l$ in $W.$ However, the homology of $W$ is generated by $N,$ so we deduce that $T$ must be homologous to $mN$ in $W$ mod $(mk-1)l.$ This implies that $T$ has the same mass as $mN,$ namely $m.$ Since mod $(mk-1)l$ minimizing adds more competitors, we deduce that any area-minimizing integral current in the class $[\Si]$ must have area at least that of $T.$ We are done.
\section{Proof of Theorem \ref{noncal},\ref{inflavr},\ref{ntorsion}}
\subsection{Proof of Theorem \ref{inflavr}}
By Theorem \ref{inflav}, for $k_1\ge 2,$  and any integer $m\ge 3,$ there exists a smooth Riemannian metric $g_{1,m}$ so that $$\mina_\Z^{g_{1,m}}(k_1[\Si])= \mina_\R^{g_{1,m}}(k_1[\Si]), \frac{{\mina_\Z^{g_{1,m}}([\Si])}}{\mina_\Z^{g_{1,m}}(k_1[\Si])}>m.$$

This implies that $$\mina_\R^{g_{1,m}}([\Si])=\frac{1}{k_1}\mina_\R^{g_{1,m}}(k_1[\Si])=\frac{1}{k_1}\mina_\Z^{g_{1,m}}(k_1[\Si])<\frac{1}{m}{\mina_\Z^{g_{1,m}}([\Si])}.$$
Let $m\to\infty.$ We are done.
\subsection{Proof of Theorem \ref{noncal}}First of all, by Lemma \ref{nontors}, we only have to consider non-torsion classes, i.e., $n[\Si]\not=0$ for any $n>0.$ 
Then the assertion follows directly from Lemma \ref{calcond}, Lemma \ref{open} and Theorem \ref{inflavr}.
\subsection{Proof of Theorem \ref{ntorsion}}
If $[\Si]$ is $n$-torsion, then we have $k[\Si]=-(n-k)[\Si].$ Note that the multiplication by $-1$ map sends integral currents in $[\Si]$ bijectively to integral currents in $-[\Si]$. On the other hand, for any integer $n$, we always have $\ms (nT)=|n|\ms (T)$ directly from definition.
Thus, we have 
\begin{align*}
\mina_\Z^g(k[\Si])=&\mina_\Z^g(-(n-k)[\Si])=\mina_\Z^g((n-k)[\Si]),\\
\mina_\Z^g(k[\Si])=&\inf_{S\text{ homologous to }k[\Si]\text{ over }\Z} \ms(S)\le \inf_{S\text{ homologous to }[\Si]\text{ over }\Z}\ms(kS)\\=&k\mina_\Z^g([\Si]),
\end{align*}
and similarly 
$$\mina_\Z^g((n-k)[\Si])\le (n-k)\mina_\Z^g([\Si]).$$
Combining the three gives
\begin{align}\label{right}
	\frac{\mina_\Z^g(k[\Si])}{\mina_\Z^g([\Si])}\le \min\{k,n-k\}.
\end{align}
On the other hand, if $kl\equiv 1(\mod n),$ then
\begin{align*}
	l(k[\Si])=[\Si].
\end{align*}
Apply (\ref{right}) with $k[\Si]$ replacing $[\Si]$, $l$ replacing $k,$ $[\Si]$ replacing $k[\Si]$, we are done.
\section{Proof of Theorem \ref{cpj}}
Note that $2[\Si]$ admits a smoothly embedding connected subvariety $N.$ This can be done in many ways. For instance, suppose $[\Si]$ is $k$ times the generator of $d$-dimensional homology on $\cpt^{\frac{d+c}{2}}$. Then consider the inclusion of subspaces $\cpt^{\frac{d}{2}+1}\s\cpt^{\frac{d+c}{2}}.$ It sends any degree $2k$ projective hypersurface of the former into the class $2[\Si]$ of the latter, e.g., by Mayer-Vietoris. A connected degree $2d$ hypersurface in $\C^{\frac{d}{2}+1}$ can be constructed explicitly, e.g., the Fermat hypersurfaces.

Now if we consider the corresponding affine variety back in $\C^{\frac{d+c}{2}+1},$ we have a complex algebraic cone $C$ corresponding to $N.$ Let $\pi_C$ be the nearest distance projection onto $C.$ Then by construction, the orthogonal complement to $\ker\pi_C$ is a smooth distribution of complex $\frac{d}{2}$ dimensional planes. Use
\begin{align*}
	\ti{\Ga}
\end{align*} to denote this distribution on $\C^{\frac{d+c}{2}+1}$.
 The distribution is invariant under diagonal action of $U(1)$ and scalings by real parameters, as $C$ and the metric on $\R^n$ is up to scalings. Note that the complex structure and Fubini-Study metric on complex projective space can be defined via the projection from the unit sphere e.g., Section 6.5 of \cite{AG}.
 
 Thus by projecting onto the complex projective space, we deduce a smooth distribution $$\Gamma$$ of complex $\frac{d}{2}$-dimensional planes in a tubular neighborhood $B_{2r}(N)$ of $N,$ so that $\Ga|_N$ equals the tangent space to $N.$
\subsection{New metric}
We need to construct a sequence of new metrics on $\{g_j\}.$ Here the notations are as follows.
\begin{itemize}
	\item $g_{FS}$ is the Fubini-Study metric,
	\item $g_{\Ga}$ is $g_{FS}$ restricted to $\Ga,$
	\item $g_{\Ga^\perp}$ is $g_{FS}$ restricted to the orthogonal complement to $\Ga,$
	\item $\textnormal{dist}(p,N)$ is the distance of $p$ to $N,$
	\item $\be$ is an even function monotonically increasing on $[0,\infty)$, equal to $(1+\frac{1}{j})$ on $[\frac{4}{9}r,\infty)$ and equal to $1$ only at $0.$
\end{itemize}
\begin{defn}
	\begin{align*}
		g_j(p)=\begin{cases}
			(1+\frac{1}{j})g_{FS}(p)\text{ on }B_r(N),\cp\\
			(1+\frac{1}{j})g_{\Gamma^\perp}(p)+\be(\textnormal{dist}(p,N)^2)g_{\Ga}(p)\text{ on }B_{{r}}(N)
		\end{cases}
	\end{align*}
	
\end{defn}
It is straightforward to verify that $g_j$ converges smoothly to $g_{FS}$ as $j\to\infty.$
\begin{lem}
	The $\frac{d}{2}$ power of K\"{a}ler form $\om_{d/2}=\frac{1}{(d/2)!}\om^{d/2}$ is still a calibration in $g_j$ and calibrates only $N$ in $2[\Si].$
\end{lem}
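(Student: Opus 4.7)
The plan is to verify both claims pointwise by combining a tangent-space comparison between $g_j$ and $g_{FS}$ with the standard Wirtinger inequality for the Kähler calibration $\om_{d/2}$ in the Fubini--Study metric. Closedness $d\om_{d/2}=0$ is automatic since $\om_{d/2}$ is a power of the Kähler form and metric-independent as a form, so only the comass bound $\cms_{g_j}(\om_{d/2})\le 1$ and the characterization of calibrated planes require argument.

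First I would establish $g_j\ge g_{FS}$ as symmetric bilinear forms on each tangent space. Outside $B_r(N)$ this is immediate from $g_j=(1+1/j)g_{FS}$. Inside $B_r(N)$, the decomposition along $\Ga\oplus\Ga^\perp$ yields
\[ g_j-g_{FS} = (1/j)\,g_{\Ga^\perp} + (\be(\dis(\cdot,N)^2)-1)\,g_{\Ga}, \]
which is positive semi-definite since $\be\ge 1$. Moreover, this form is strictly positive-definite at every $p\notin N$ (because $\be(\dis(p,N)^2)>1$ there), while at $p\in N$ it has kernel exactly $\Ga|_p=\T_pN$. Consequently, for any oriented simple $d$-vector $\xi$ at $p$, $|\xi|_{g_{FS}}\le|\xi|_{g_j}$, and combining with Wirtinger's inequality $\om_{d/2}(\xi)\le|\xi|_{g_{FS}}$ in $g_{FS}$ we get $\om_{d/2}(\xi)\le|\xi|_{g_j}$, so $\cms_{g_j}(\om_{d/2})\le 1$ and $\om_{d/2}$ is a calibration in $g_j$.

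Next I would analyze the equality case. If $\om_{d/2}(\xi)=|\xi|_{g_j}$ for a unit (in $g_j$) simple $d$-vector $\xi$ spanning a plane $P$, both inequalities above must be equalities. Wirtinger equality forces $P$ to be a complex $d/2$-plane for the standard complex structure on $\cpt^{(d+c)/2}$. The other equality, $|\xi|_{g_{FS}}=|\xi|_{g_j}$, is equivalent to $\det(g_{FS}|_P)=\det(g_j|_P)$ in any basis of $P$; using the elementary fact that $\det(A+B)>\det(A)$ whenever $A>0$, $B\ge 0$, $B\ne 0$ (diagonalize $A^{-1/2}BA^{-1/2}$), this forces $(g_j-g_{FS})|_P\equiv 0$. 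By the kernel description above, this is possible only if $p\in N$ and $P\subset\T_pN$, and a real-dimension count then gives $P=\T_pN$. Hence the only $\om_{d/2}$-calibrated planes in $g_j$ are the tangent planes of $N$.

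Finally, if $T$ is an integral current homologous to $2[\Si]$ and calibrated by $\om_{d/2}$ in $g_j$, then the tangent plane $\vec T(p)$ must be $\om_{d/2}$-calibrated for $\|T\|$-a.e.\ $p$, so $\spt T\s N$ and $\vec T$ agrees with the oriented tangent plane of $N$ almost everywhere on $\spt T$. The constancy theorem on the connected closed submanifold $N$ then yields $T=k\cu{N}$ for some integer $k$, and the identity $[T]=2[\Si]=[N]$ together with the non-torsionness of $[N]$ in $H_d(\cpt^{(d+c)/2};\Z)$ forces $k=1$, i.e.\ $T=N$. The main technical point is the equality analysis identifying the full equality locus of $|P|_{g_j}=|P|_{g_{FS}}$ with the tangent bundle of $N$; this relies crucially on $\be>1$ off the origin and on the strict $(1+1/j)$-scaling of the $\Ga^\perp$-block, which together pin the calibrated set to exactly $N$ rather than just to a neighborhood.
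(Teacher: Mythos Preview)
Your proof is correct and takes a genuinely different route from the paper's. The paper expands $\om_{d/2}$ in a $g_j$-orthonormal frame adapted to the splitting $\Ga\oplus\Ga^\perp$ and then invokes Lemma~\ref{comp} (a Dadok--Harvey--Morgan type comass computation for sums of complex coordinate-plane forms) to obtain the exact value $\cms_{g_j}\om_{d/2}=\max\{\be^{-d/2},(1+1/j)^{-d/2}\}$ and to identify the unique calibrated plane. Your argument instead observes the pointwise inequality $g_j\ge g_{FS}$ of quadratic forms, deduces $|\xi|_{g_j}\ge|\xi|_{g_{FS}}$ for simple $d$-vectors via a Gram-determinant comparison, and then chains this with the classical Wirtinger inequality; the equality case is handled by the elementary fact that $\det(A+B)=\det A$ with $A>0$, $B\ge 0$ forces $B=0$, which pins $P$ into $\ker(g_j-g_{FS})=\T_pN$.

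What each approach buys: the paper's method yields the precise pointwise comass and is tailored to the specific block-diagonal structure of $g_j$, at the cost of needing the auxiliary Lemma~\ref{comp}. Your method is shorter, uses no special lemma beyond Wirtinger, and is more robust---it would apply verbatim to any deformation $g'\ge g_{FS}$ whose excess $g'-g_{FS}$ has null space equal to $\T N$ along $N$ and is positive definite elsewhere. The only small remark is that you might make explicit that the Wirtinger equality fixes the \emph{orientation} of $P$ (not just its underlying plane), which is what ultimately forces $k=+1$ rather than $k=-1$ in the constancy-theorem step; you allude to this but it is worth stating.
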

\begin{proof}
$\om_{d/2}$ is a calibration on $B_r(N)\cp$ with comass $(1+\frac{1}{j})^{-\frac{d}{2}}$ by Theorem 2.2 of \cite{DHM}. For any point $p\in B_r(N),$ the tangent space $\T_p$ to $\cpt^{\frac{d+c}{2}}$ admits a splitting into orthogonal complex subspaces $\Ga$ and $\Ga^\perp.$ In $g,$ taking an orthonormal basis $\{e_1,Je_1,\cd,e_{d/2},Je_{d/2}\}$ of $\Ga$ and $\{f_1,Jf_1,\cd,f_{c/2},Jf_{c/2}\}$ of $\Ga^\perp$. Then we have
\begin{align*}
	\om_{d/2}=\frac{1}{(\frac{d}{2})!}\big(\sum_le_l\du\w (Je_l)\du+\sum_mf_m\du\w(Jf_m)\du\big)^{d/2},
\end{align*}
where $\ast$ denotes the musical isomorphism in $g.$ Now we reinterpret the above expression in the rescaled orthonormal basis in $g_j$. 
On $B_{{r}}(N),$ we have
\begin{align*}
	\om_{d/2}=&\frac{1}{(\frac{d}{2})!}\bigg(\be\m\sum_l\be^{1/2}e_l\du\w \be^{1/2}(Je_l)\du\\&+(1+\frac{1}{j})\m\sum_m(1+\frac{1}{j})^{1/2}f_m\du\w(1+\frac{1}{j})^{1/2}(Jf_m)\du)^{d/2}.
\end{align*}
By Lemma \ref{comp}, we deduce that
\begin{align*}
	\cms_{g_j}\om_{d/2}=\max\{\be^{-\frac{d}{2}},(1+\frac{1}{j})^{-\frac{d}{2}}\},
\end{align*}which is clearly at most $1$.

 Lemma \ref{comp} shows that $\om_{d/2}(Q)=1$ for a unit simple $d$-vector $Q$ in $B_{r}(N)$ if and only if $Q\in\Ga$ and $\be=1.$ This implies $Q$ is tangent to $N$. By the constancy theorem (4.1.7 in \cite{HF}), integer multiples of $N$ are the unique integral currents calibrated by $\om_{d/2}.$ Since $2[\Si]$ is not a torsion class, we deduce that $\om_{d/2}$ only calibrates $N$ in $2[\Si].$
\end{proof}
Since $N$ is irreducible by Lemma 2.10 in \cite{ZL1}, arguing as in previous sections show that no area-minimizing current in $[\Si]$ can be calibrated in $g_j.$ By Lemma \ref{open}, we are done.
\section{Proof of Theorem \ref{singcal}}
By Lemma \ref{singco1}, it suffices to construct a metric with at least one smooth minimizer and at least one singular minimizer.

By representation theorem of \cite{WM}, there exists a primitive class $[\ga],$ so that $\ga$ can be represented by a smoothly embedded minimal hypersurface $N$ and $[\Si]=k[\ga]$ for some $k>0.$

Since the normal bundle to $N$ is trivial, we can flow $N$ to one side and get another embedded representative $N'.$ Let $C$ be any $7$-dimensional area-minimizing hypercone in $\R^8.$ Consider the following hypersurface 
\begin{align*}
	\si(C)=(C\times \R^{(d-7)+1})\cap S^{8+(d-7)}\s \R^{8+(d-7)+1}.
\end{align*} 
By Lemma 4.1 in \cite{ZL1}, $\si(C)$ is smooth outside of $0\times S^{d-7}\s\R^{8}\times \R^{(d-7)+1},$ and near the singular set $\si(C)$ can be sent to truncated $C$ times $S^{d-7}$ by a diffeomorphism. Note that by Theorem 2.1 in \cite{RHLS}, there is a foliation of $\R^8$ by area-minimizing hypersurfaces with  $C$ as one leaf. Let $\nu$ be the normal vector field of the foliation then $\phi=\ast\nu^\ast$ will be a calibration form with an isolated singularity at $0.$ Here the first $\ast$is  the Hodge star and the second $\ast$ is the musical isomorphism. By taking $\phi\w dvol_{S^{d-7}}$ and taking a product metric, we get a calibration form that calibrates $\si(C)$ near its singular set.

Now pick a point $p$ in $N'$. Then we can embed $\si(C)\s S^{8+(d-7)}-\text{pt}$ into the upper half ball of $B_{r}(p)$ for $r<\frac{1}{2}d(N,N'.)$ Then we can do a connected sum to connect $\si(C)$ to $N'.$ Note that $N'\#\si(C)$ still possess the same homology class as $N.$

By Lemma 2.9,2.8,2.7 (in this order) of \cite{ZL1}, there exists a smooth metric $g$, so that both $N'\# \si(C)$ and $N$ are homologically area-minimizing in $[\ga],$ the primitive class associated with $[\Si].$ 

By Section 5.10 of \cite{HF2} and Lemma \ref{mul}, we deduce that $k(N'\#\si(C))$ and $kN$ are both mass-minimizing real flat chains in $[\Si].$ By Lemma \ref{singco1}, we are done.

\end{document}